\theoremstyle{plain}
\newtheorem{thm}{Theorem}[section]
\newtheorem{theorem}[thm]{Theorem}
\newtheorem{corollary}[thm]{Corollary}
\newtheorem{lemma}[thm]{Lemma}
\theoremstyle{definition}
\newtheorem{de}[thm]{Definition}
\newtheorem{remark}[thm]{Remark}
\newtheorem{notation}[thm]{Notation}
\newtheorem{example}[thm]{Example}
\newcommand{\Z}{\mathbb{Z}}
\newcommand{\T}{\mathcal{T}}
\numberwithin{equation}{section}
\begin{document}

\title{Knot-theoretic ternary groups}

\author{Maciej Niebrzydowski}
\author{Agata Pilitowska}
\author{Anna Zamojska-Dzienio}

\address{(M.N.) Institute of Mathematics,
Faculty of Mathematics, Physics and Informatics,
University of Gda{\'n}sk, 80-308 Gda{\'n}sk, Poland}
\address{(A.P., A.Z.) Faculty of Mathematics and Information Science, Warsaw University of Technology, Koszykowa 75, 00-662 Warsaw, Poland}

\email{(M.N.) mniebrz@gmail.com}
\email{(A.P.) apili@mini.pw.edu.pl}
\email{(A.Z.) A.Zamojska-Dzienio@mini.pw.edu.pl}


\keywords{Ternary groups, flat links on surfaces, semi-commutativity, ternary entropy, knot invariant.}
\subjclass[2010]{Primary: 20N15, 57M25. Secondary: 57M27, 03C05, 08A05.}

\date{\today}

\begin{abstract}
We describe various properties and give several characterizations of ternary groups satisfying two axioms derived from the third Reidemeister move in knot theory.
Using special attributes of such ternary groups, such as semi-commutativity, we construct a ternary invariant of curves immersed in compact surfaces, considered up to flat Reidemeister moves.
\end{abstract}

\maketitle
\section{Introduction and motivation}
The subject of our paper, knot-theoretic ternary groups, is in the intersection of two classical areas: ternary group theory and knot theory. Ternary groups have a long history. As far back as 1904, E. Kasner considered generalizing the properties of binary groups \cite{Post}. The complete formulation of the concept of $n$-ary groups, generalizing binary groups, appeared in D{\"o}rnte's paper \cite{Dor}. Independently, in 1932, Lehmer \cite{Leh} considered a structure he termed {\it triplex}, which in D{\"o}rnte's terminology is an abelian ternary group. For a thorough introduction to the subject of $n$-ary groups, see \cite{Post}. Knot theory has its roots in the study of
embeddings of simple closed curves in a three-dimensional space $\mathbb{R}^3$ (or $\mathbb{S}^3$). A single such curve is called a {\it knot}, and a collection of knots is a {\it link}. Knots and links are analyzed using {\it diagrams}, that is, projections on the plane that involve double points. Two link diagrams represent the same link if and only if one can be obtained from the other by a finite sequence of {\it Reidemeister moves} of type I, II and III, and planar isotopy. A {\it link invariant} is a function defined on diagrams that is not changed by  Reidemeister moves and planar isotopy. For more details about these notions, and their various generalizations, see for example \cite{Kau}. One of the main goals of knot theory is to find strong and at the same time computable link invariants.
A possible approach to this problem is to use assignments of elements of some binary groupoid $(X,*)$ to the arcs of a given link diagram, simply called {\it colorings}. If the number of colorings is to be unchanged by Reidemeister moves, the binary operation $*$ has to satisfy certain conditions.
In this way one obtains the axioms for racks and quandles, with self-distributivity corresponding to the Reidemeister move of type III, see e.g. \cite{Kam}.

\begin{figure}
\begin{center}
\includegraphics[height=5cm]{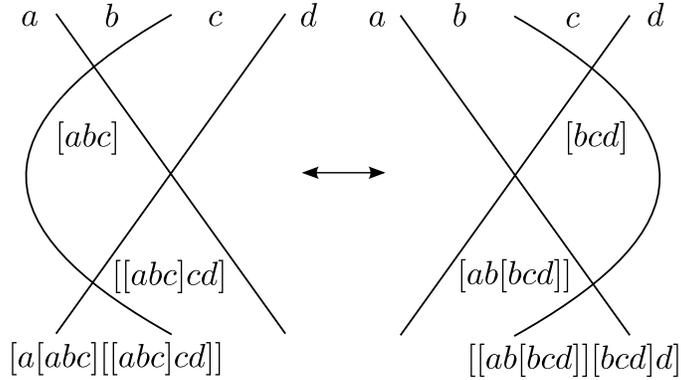}
\caption{Ternary colorings and the third Reidemeister move (flat version)}\label{flatR3}
\end{center}
\end{figure}

Instead of coloring arcs of a link diagram using a binary operation $*$, one can color the regions of the complement of the diagram using a ternary algebra
$(A,[\, ])$. Again, if the number of ternary colorings is to be an invariant, the operation $[\ ]$ has to satisfy some axioms. The first and second Reidemeister move require $(A,[\, ])$ to be a ternary quasigroup. The third Reidemeister move yields two conditions:
\begin{equation}\label{A3L}
[[abc]cd]=[[ab[bcd]][bcd]d],
\end{equation}
\begin{equation}\label{A3R}
[ab[bcd]]=[a[abc][[abc]cd]],
\end{equation}
for all $a$, $b$, $c$, $d\in A$.
They are illustrated in Fig. \ref{flatR3}, where the colors of the regions on the bottom left part of the figure have to be equal to the colors of the corresponding regions on the right, and the flat version of the third Reidemeister move is shown. These axioms appeared in \cite{Nie14}, and in \cite{Nie17} a homology theory was developed for the structures in which they hold. Knot-theoretic ternary groups are ternary groups satisfying \eqref{A3L} and \eqref{A3R}.

The paper is organized as follows. In Section \ref{sec:2} we collect basic results on ternary groups and characterize knot-theoretic ternary groups. In Section \ref{S:Mal} we focus on idempotent ternary groups (all of them are knot-theoretic ones). Section \ref{S:isom} is devoted to representation and isomorphism problems for knot-theoretic ternary groups. In particular, we enumerate such ternary groups with a small number of elements. Section \ref{sec:3} provides further characterization of knot-theoretic ternary groups. Finally, in Section \ref{S:FVL} we construct an invariant of curves immersed in compact surfaces, considered up to flat Reidemeister moves.

\section{Basic properties of knot-theoretic ternary groups}\label{sec:2}
We begin with some preliminary definitions.

A {\it ternary groupoid} is a non-empty set $A$ equipped with a ternary operation $[\, ]\colon A^3\to A$. It is denoted by $(A,[\, ])$.

A ternary groupoid $(A,[\, ])$ is called a \emph{ternary quasigroup} if for every $a,b,c\in A$ each of the following equations is uniquely solvable for $z\in A$:
\begin{align}
&[zab]=c, \label{eq:l}\\
&[azb]=c, \label{eq:m}\\
&[abz]=c.\label{eq:r}
\end{align}

\begin{remark}
Each ternary quasigroup $(A,[\, ])$ has left, middle and right cancellation property. For example, by the unique solvability of the equation \eqref{eq:l} one obtains that for all $x,y,a,b\in A$, $[xab]=[yab]$ always implies that $x=y$ (\emph{left-cancellativity}). \emph{Middle cancellativity} follows by \eqref{eq:m} and \emph{right cancellativity} by \eqref{eq:r}, respectively.
\end{remark}

In a ternary quasigroup $(A,[\, ])$, for every element $a\in A$, the unique solution of the equation $[aza]=a$  is called the \emph{skew element to} $a$ and denoted by $\bar{a}$.

The property of associativity of binary operations can be generalized to ternary case.
We say that an operation $[\, ] \colon A^3 \to A$ is \emph{associative} if for all $a,b,c,d,e\in A$
\begin{align*}
[[abc]de]=[a[bcd]e]=[ab[cde]].
\end{align*}
From this condition follows that in expressions (of length 2$k$-1) obtained by applying the operation $[\, ]$ $k$ times, different placements of parentheses yield the same final result.
An algebra $(A,[\, ])$ with one ternary associative operation is called a \emph{ternary semigroup}.
An associative ternary quasigroup is called a \emph{ternary group}.

If $(A,[\, ])$ is a ternary group, then by \cite[Theorem 10]{BorDuDu} we have for every $a$, $b$, $c\in A$:
\begin{align}
&[\bar{a}aa]=[a\bar{a}a]=[aa\bar{a}]=a, \label{sk:1}\\
&[ba\bar{a}]=[b\bar{a}a]=[a\bar{a}b]=[\bar{a}ab]=b,\label{sk:2}\\
&\overline{[abc]}=[\bar{c}\bar{b}\bar{a}],\label{sk:3}\\
&\bar{\bar{a}}=a.\label{sk:4}
\end{align}
The pair $(a,\bar{a})$ (or $(\bar{a},a)$) plays a role of a \emph{binary identity} \cite{Str}.

A ternary operation $P\colon A^3\to A$ is called a \emph{Mal'cev operation}, if for every $a,b\in A$, $P(a,b,b)=P(b,b,a)=a$. Then, a ternary algebra $(A,P)$ is called a \emph{Mal'cev algebra}. In this way, a ternary group $(A,[\, ])$ is Mal'cev if for all $a,b\in A$
\begin{align*}
[abb]=[bba]=a.
\end{align*}
In general, each ternary group $(A,[\, ])$ has at least one Mal'cev \emph{derived} operation $P(x,y,z)=[x\bar{y}z]$.

We say that a ternary algebra $(A,[\, ])$ is
\begin{itemize}
\item \emph{idempotent} if for every $a\in A$
\begin{align*}
[aaa]=a;
\end{align*}
\item \emph{$(1,2)$-associative} if for all $a,b,c,d,e\in A$
\begin{align*}
[[abc]de]=[a[bcd]e];
\end{align*}
\item \emph{$(2,3)$-associative} if for all $a,b,c,d,e\in A$
\begin{align*}
[a[bcd]e]=[ab[cde]];
\end{align*}
\item \emph{commutative} if for every $a,b,c\in A$
\begin{align*}
[abc]=[bac]=[acb]=[cba];
\end{align*}
\item \emph{semi-commutative} (or \emph{semi-abelian}) if for every $a,b,c\in A$
\begin{align*}
[abc]=[cba];
\end{align*}
\item \emph{entropic} (or \emph{medial}) if for every $a_1,a_2,a_3,b_1, b_2,b_3,c_1,c_2,c_3\in A$
\begin{align*}
[[a_1a_2a_3][b_1b_2b_3][c_1c_2c_3]]=[[a_1b_1c_1][a_2b_2c_2][a_3b_3c_3]].
\end{align*}
\end{itemize}
An element $e\in A$ is called a \emph{left (middle, right) neutral} if for every $a\in A$
\begin{align}\label{eq:neut}
[eea]=a\;\;\ ([eae]=a,\quad [aee]=a).
\end{align}
An element $e\in A$ is called \emph{neutral}, if it is left, middle and right neutral.

\begin{remark}\label{rm:2}
In a semi-commutative ternary groupoid the conditions \eqref{A3L} and \eqref{A3R} are equivalent. Indeed, by \eqref{A3L} and semi-commutativity we have
\begin{align*}
[ab[bcd]]=[[bcd]ba]=[[dcb]ba]=[[dc[cba]][cba]a]=[a[abc][[abc]cd]].
\end{align*}
On the other hand, by \eqref{A3R} and semi-commutativity we obtain
\begin{align*}
[[abc]cd]=[dc[cba]]=[d[dcb][[dcb]ba]]=[[ab[bcd]][bcd]d].
\end{align*}
\end{remark}
\begin{remark}\label{rm:3}
A semi-commutative $(1,2)$-associative ternary groupoid is a ternary semigroup.
\end{remark}
\begin{remark}\label{rm:1}
Each Mal'cev algebra is idempotent. Moreover, a ternary group $(A,[\, ])$ is idempotent if and only if $a=\bar{a}$ for each $a\in A$. In particular, each idempotent ternary group is semi-commutative:
\begin{align*}
[abc]=\overline{[abc]}=[\bar{c}\bar{b}\bar{a}]=[cba].
\end{align*}
\end{remark}
\begin{theorem}\cite[Corollary 9]{Dud80}
A ternary group $(A,[\, ])$ is commutative if and only if for every $a,b\in A$,
\begin{align*}
[ab\bar{a}]=b\;\; or\;\; [\bar{a}ba]=b.
\end{align*}
\end{theorem}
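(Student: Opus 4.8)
The plan is to prove the two implications separately, with the forward direction essentially free and the backward direction carrying all the content. Throughout I treat the statement as asserting that each of the universally quantified identities $[ab\bar a]=b$ and $[\bar a ba]=b$ is equivalent to commutativity.

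\emph{Necessity.} Assuming $(A,[\,])$ is commutative, I would rewrite $[ab\bar a]=[ba\bar a]$ by swapping the first two entries and then invoke \eqref{sk:2} to get $[ba\bar a]=b$. The second identity is handled symmetrically: commutativity gives $[\bar a ba]=[\bar a ab]$, and $[\bar a ab]=b$ by \eqref{sk:2}. So commutativity forces both conditions.

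\emph{Sufficiency.} The first observation is that the two identities are equivalent: replacing $a$ by $\bar a$ in $[ab\bar a]=b$ and using $\overline{\bar a}=a$ from \eqref{sk:4} yields $[\bar a ba]=b$, and conversely. Hence I may assume $[ab\bar a]=b$ holds for all $a,b$ and use freely both $[ab\bar a]=b$ and $[\bar a ba]=b$. The heart of the argument is then to manufacture the two adjacent transpositions of the ternary product. For the swap of the first two entries I would feed the identity $[\bar a ba]=b$ into the middle slot and reassociate once: $[abc]=[a[\bar a ba]c]=[a\bar a[bac]]=[bac]$, where the middle step is a single use of the associativity law $[a[bcd]e]=[ab[cde]]$ and the last step is $[a\bar a z]=z$ from \eqref{sk:2}. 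For the swap of the last two entries I would use $[cb\bar c]=b$ in exactly the same way: $[abc]=[a[cb\bar c]c]=[ac[b\bar c c]]=[acb]$, again by one application of associativity and then $[b\bar c c]=b$ from \eqref{sk:2}. Since the transpositions $(1\,2)$ and $(2\,3)$ generate the full symmetric group $S_3$, invariance of $[abc]$ under these two swaps gives invariance under every permutation of $(a,b,c)$, so $[abc]=[bac]=[acb]=[cba]$, which is commutativity.

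I expect no serious obstacle here: the only thing to watch is choosing the right substitution, namely inserting $[\bar a ba]$ (respectively $[cb\bar c]$) into the middle argument so that one reassociation collapses the expression through \eqref{sk:2}. Everything else is a direct appeal to \eqref{sk:2} and \eqref{sk:4} together with the general associativity law recorded after the definition of a ternary semigroup.
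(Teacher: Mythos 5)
Your proposal is correct, but note that the paper itself offers no proof of this statement: it is quoted verbatim as \cite[Corollary 9]{Dud80}, so there is no in-paper argument to compare against. Your self-contained derivation is sound on its own terms: necessity is immediate from \eqref{sk:2}, the substitution $a\mapsto\bar a$ together with \eqref{sk:4} correctly shows the two displayed identities are interchangeable, and the two computations $[abc]=[a[\bar a ba]c]=[a\bar a[bac]]=[bac]$ and $[abc]=[a[cb\bar c]c]=[ac[b\bar c c]]=[acb]$ are legitimate single applications of the general associativity law followed by \eqref{sk:2}; since the transpositions $(1\,2)$ and $(2\,3)$ generate $S_3$, full commutativity in the sense defined in Section 2 follows. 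The one point worth flagging explicitly is your reading of the ``or'': you treat the statement as offering two alternative universally quantified characterizations (either identity, holding for all $a,b$, is equivalent to commutativity), rather than a pointwise disjunction where different pairs $(a,b)$ could satisfy different identities. Your equivalence step (substituting $\bar a$ for $a$) relies on the universal reading, which is indeed the standard interpretation of Dudek's corollary, but it is good that you stated the convention, since the pointwise reading would require a different argument.
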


\begin{theorem}\cite[Theorem 3]{GlaGle}
A ternary group is semi-commutative if and only if it is entropic.
\end{theorem}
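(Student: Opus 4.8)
The plan is to reduce the whole equivalence to a statement about ordinary (binary) groups via the Hossz\'u--Gluskin representation of ternary groups. Fix $e\in A$ and set $x\cdot y:=[x\bar{e}y]$. Using associativity and the identities \eqref{sk:2} one checks that $(A,\cdot)$ is a group with neutral element $e$ (the quasigroup axioms supply the inverses), and the theorem of Hossz\'u and Gluskin provides an automorphism $\varphi$ of $(A,\cdot)$ and a fixed element $b$ such that
\begin{equation*}
[xyz]=x\cdot\varphi(y)\cdot\varphi^{2}(z)\cdot b,\qquad \varphi(b)=b,\qquad \varphi^{2}(x)=b\cdot x\cdot b^{-1}
\end{equation*}
for all $x,y,z\in A$. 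My intention is to show that in this language \emph{both} semi-commutativity and entropy are equivalent to the single condition that $(A,\cdot)$ is abelian and $\varphi^{2}=\mathrm{id}$; the equivalence in the theorem then follows at once.

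First I would rewrite each side of the medial law by substituting the displayed formula and pushing every $b$ to the right via $b\cdot g=\varphi^{2}(g)\cdot b$ (a restatement of $\varphi^{2}(x)=bxb^{-1}$) together with $\varphi(b)=b$. The bookkeeping collapses both sides to the product of the nine entries with $\varphi^{k-1}$ applied to whatever stands in the $k$-th slot of the flattened word, times a common factor $b^{4}$; after cancelling $b^{4}$, entropy becomes
\begin{equation*}
a_{1}\varphi(a_{2})\varphi^{2}(a_{3})\cdots\varphi^{8}(c_{3})=a_{1}\varphi(b_{1})\varphi^{2}(c_{1})\cdots\varphi^{8}(c_{3}),
\end{equation*}
the two words differing only by the row-major versus column-major reading of the $3\times3$ array of arguments. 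Specialising all entries but $a_{2}$ to $e$ gives $\varphi(a_{2})=\varphi^{3}(a_{2})$, hence $\varphi^{2}=\mathrm{id}$; feeding this back and then specialising $b_{2},b_{3},c_{2}$ to $e$ forces $\varphi(a_{2})a_{3}$ to commute with $\varphi(b_{1})c_{1}$ for all choices, and letting the free variables run over $A$ makes $(A,\cdot)$ abelian. Conversely, if $(A,\cdot)$ is abelian and $\varphi^{2}=\mathrm{id}$, both readings are products of the same nine factors, so entropy holds.

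The same representation settles semi-commutativity: $[xyz]=[zyx]$ reads $x\varphi(y)\varphi^{2}(z)=z\varphi(y)\varphi^{2}(x)$, and the specialisations $z=e$ then $y=e$ yield $\varphi^{2}=\mathrm{id}$ and the centrality of the (surjective) image of $\varphi$, i.e.\ again that $(A,\cdot)$ is abelian; the converse is immediate. I expect the main obstacle to be organisational rather than conceptual: executing the $b$-shifting across the nine-fold products without exponent errors, and verifying at the outset that the retract $(A,\cdot)$ is a group with Hossz\'u--Gluskin data $(\varphi,b)$ satisfying the stated relations. For the single direction ``semi-commutative $\Rightarrow$ entropic'' a computation-free alternative is also available: by generalised associativity each side of the medial law equals one flat product of length $9$, the two differing by a permutation of positions, and semi-commutativity lets one reverse any three consecutive entries (swapping the two outer ones); a sequence of such reversals carries the row-major word into the column-major one.
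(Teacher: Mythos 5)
The paper offers no proof of this statement at all: it is quoted verbatim from G\l{}azek--Gleichgewicht \cite{GlaGle} as a known result, so there is nothing internal to compare against. Your argument is, however, correct and essentially self-contained modulo the Hossz\'u--Gluskin theorem (itself a standard cited-type result, a semi-commutative special case of which the paper does use as Lemma 4.3). I checked the key computations: pushing the three trailing $b$'s to the right via $bg=\varphi^{2}(g)b$ does collapse $[[a_1a_2a_3][b_1b_2b_3][c_1c_2c_3]]$ to $\prod_{k=1}^{9}\varphi^{k-1}(w_k)\cdot b^{4}$ with $w$ the row-major word, and the specializations you name do yield $\varphi^{2}=\mathrm{id}$ and then commutativity of $(A,\cdot)$; the converse works because the position of the $(i,j)$ entry differs between the row-major and column-major readings by $2(i-j)$, so the exponents of $\varphi$ agree modulo $2$ --- it would be worth making this parity observation explicit, since it is also exactly what makes your ``computation-free'' alternative go through (the transpositions $(i,i+2)$ coming from reversing three consecutive entries generate precisely the parity-preserving permutations, and the row-to-column permutation is parity-preserving). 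The one point you should not leave as ``I would check'' is the verification that $x\cdot y=[x\bar{e}y]$ is a group with the stated Hossz\'u--Gluskin data $(\varphi,b)$; the group axioms follow quickly from associativity and \eqref{sk:2}, but the representation formula itself is the real imported ingredient, so cite it precisely. Compared with simply citing \cite{GlaGle}, your route buys an actual proof at the cost of invoking a heavier structural theorem.
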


\begin{lemma}\label{lm:1}
Let $(A,[\, ])$ be a ternary group. Then the following conditions are equivalent for every $a,b,c\in A$
\begin{align}\label{eq:22}
&[abb]=\bar{a}=[bba],
\end{align}

\begin{align}\label{eq:4}
&[\bar{a}bc]=[a\bar{b}c]=[ab\bar{c}].
\end{align}
\end{lemma}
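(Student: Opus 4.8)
The plan is to prove the two implications separately; the unifying idea is that, assuming \eqref{eq:22}, each of the three expressions in \eqref{eq:4} collapses to the single element $\overline{[abc]}$ (which, if one prefers, equals $[\bar c\bar b\bar a]$ by \eqref{sk:3}). Once this is shown, the three are automatically equal to one another, which is exactly \eqref{eq:4}.

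For the easier direction \eqref{eq:4}$\Rightarrow$\eqref{eq:22} I would just specialize the identities. Setting $a=b$ in the equality $[a\bar bc]=[ab\bar c]$ gives $[b\bar bc]=[bb\bar c]$; the left-hand side equals $c$ by \eqref{sk:2}, so $[bb\bar c]=c$, and replacing $c$ by $\bar c$ via \eqref{sk:4} yields $[bbc]=\bar c$, that is $[bba]=\bar a$. Symmetrically, setting $c=b$ in $[\bar a bc]=[ab\bar c]$ gives $[\bar a bb]=[ab\bar b]=a$ by \eqref{sk:2}, and replacing $a$ by $\bar a$ gives $[abb]=\bar a$. Together these two facts are \eqref{eq:22}.

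For the converse, assume \eqref{eq:22}, i.e. $[xyy]=\bar x=[yyx]$ for all $x,y$, and first handle the two outer slots by a ``pad and reassociate'' trick. Using associativity together with \eqref{eq:22},
\[
[ab\bar c]=[ab[cbb]]=[[abc]bb]=\overline{[abc]},\qquad [\bar a bc]=[[bba]bc]=[bb[abc]]=\overline{[abc]}.
\]
Here $[cbb]=\bar c$ and $[bba]=\bar a$ use \eqref{eq:22}, the middle equalities are associativity (both sides are the flat products $abcbb$ and $bbabc$), and collapsing $[[abc]bb]$ and $[bb[abc]]$ to $\overline{[abc]}$ is \eqref{eq:22} applied to the element $[abc]$.

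The remaining middle slot is the main obstacle: \eqref{eq:22} only produces skew elements out of the two outer positions, so the template above does not directly yield $[a\bar bc]$. The fix is to run the argument in reverse, starting from $[a\bar bc]$ itself. Padding on the right by $cc$ and reassociating,
\[
\overline{[a\bar bc]}=[[a\bar bc]cc]=[a[\bar bcc]c]=[abc],
\]
since $[\bar bcc]=b$ by \eqref{eq:22} and \eqref{sk:4}, while $[[a\bar bc]cc]=\overline{[a\bar bc]}$ is again \eqref{eq:22}. Applying the bar and using $\bar{\bar x}=x$ gives $[a\bar bc]=\overline{[abc]}$. Combining this with the two outer identities yields $[\bar a bc]=[a\bar bc]=[ab\bar c]=\overline{[abc]}$, which is \eqref{eq:4}.
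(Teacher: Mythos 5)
Your proof is correct; the core technique is the same as the paper's (padding with instances of $[xyy]=\bar{x}$ and reassociating), but the organization differs in both directions, so a brief comparison is worthwhile. For \eqref{eq:22}$\Rightarrow$\eqref{eq:4} the paper runs a single direct chain
$[\bar{a}cd]=[[abb]cd]=[a[bbc]d]=[a\bar{c}d]=[a[cdd]d]=[ac[ddd]]=[ac\bar{d}]$,
the key trick being to rewrite $\bar{c}$ as $[cdd]$ using the \emph{third} variable, so that reassociation pushes the bar one slot to the right; this handles the middle slot head-on and avoids the detour you take. You instead show that each of the three expressions equals $\overline{[abc]}$, dealing with the middle slot by running the padding argument backwards from $[a\bar{b}c]$. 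That is a valid workaround for exactly the obstacle you name, and it is arguably more conceptual: it makes explicit that \eqref{eq:4} is the statement that barring any single argument computes $\overline{[abc]}$, consistent with \eqref{sk:3}. For the converse, the paper shifts bars, writing $[abb]=[\bar{\bar{a}}bb]=[\bar{a}\bar{b}b]=\bar{a}$ via \eqref{eq:4} and \eqref{sk:2}, whereas you specialize variables ($a=b$, respectively $c=b$) and then invoke \eqref{sk:2} and \eqref{sk:4}; both routes are equally short and correct. There are no gaps in your argument.
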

\begin{proof}
By \eqref{eq:22}:
\begin{align*}
[\bar{a}cd]=[[abb]cd]=[a[bbc]d]=[a\bar{c}d]=[a[cdd]d]=[ac[ddd]]=[ac\bar{d}].
\end{align*}
On the other hand, by \eqref{eq:4}:
\begin{align*}
&[abb]=[\bar{\bar{a}}bb]=[\bar{a}\bar{b}b]=\bar{a},\\
&[bba]=[bb\bar{\bar{a}}]=[b\bar{b}\bar{a}]=\bar{a}.
\end{align*}
\end{proof}

\begin{lemma}
Let $(A,[\, ])$ be a semi-commutative ternary group which satisfies for every $a,b\in A$
\begin{align}\label{eq:23}
[abb]=\bar{a}.
\end{align}
Then $(A,[\, ])$ satisfies the conditions \eqref{A3L} and \eqref{A3R}.
\end{lemma}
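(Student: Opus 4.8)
The plan is to use associativity to fuse the repeated middle entries appearing on each side of \eqref{A3L} into a block of the shape $[xyy]$, and then apply the hypothesis \eqref{eq:23} to collapse that block to a skew element. First I would note that semi-commutativity promotes the one-sided hypothesis $[abb]=\bar a$ to the full condition \eqref{eq:22}: since $[bba]=[abb]=\bar a$, both equalities of \eqref{eq:22} hold (and hence, by Lemma \ref{lm:1}, so does \eqref{eq:4}, although we shall not need it directly).

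For \eqref{A3L} I would rewrite the left-hand side by the associative law in its middle-grouping form $[[xyz]uv]=[x[yzu]v]$. With $(x,y,z,u,v)=(a,b,c,c,d)$ this gives $[[abc]cd]=[a[bcc]d]$, and since $[bcc]=\bar b$ by \eqref{eq:23}, the left-hand side equals $[a\bar b d]$. For the right-hand side, set $w=[bcd]$; the same middle-grouping step applied to $[[abw]wd]$ yields $[a[bww]d]=[a\bar b d]$, again by \eqref{eq:23}. Thus both sides reduce to $[a\bar b d]$, and \eqref{A3L} follows.

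The condition \eqref{A3R} then follows immediately from Remark \ref{rm:2}, since $(A,[\,])$ is semi-commutative and \eqref{A3L} has just been established. Alternatively, one can verify \eqref{A3R} by the identical maneuver: $[ab[bcd]]=[a[bbc]d]=[a\bar c d]$, and with $v=[abc]$ the right-hand side $[av[vcd]]=[a[vvc]d]=[a\bar c d]$; here one uses the companion identity $[yyx]=\bar x$ coming from \eqref{eq:22}.

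I do not expect a genuine obstacle here: the whole argument rests on the single observation that associativity lets one isolate the duplicated entry as a $[xyy]$ (or $[yyx]$) block, after which \eqref{eq:22} finishes everything. The only point needing care is that the direct route to \eqref{A3R} uses the $[yyx]=\bar x$ half of \eqref{eq:22}, which is precisely what semi-commutativity supplies from the given $[xyy]=\bar x$; this indicates that the semi-commutativity hypothesis is genuinely invoked for the second axiom.
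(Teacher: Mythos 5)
Your proof is correct, and it takes a genuinely more direct route than the paper's. You reduce each side of \eqref{A3L} independently to the common normal form $[a\bar{b}d]$, using only the middle-regrouping form of associativity together with the hypothesis $[xyy]=\bar{x}$: indeed $[[abc]cd]=[a[bcc]d]=[a\bar{b}d]$ and, with $w=[bcd]$, $[[abw]wd]=[a[bww]d]=[a\bar{b}d]$. The paper instead transforms the right-hand side of \eqref{A3L} into the left-hand side through a longer chain that invokes the entropic law (available because semi-commutative ternary groups are entropic), the bar-shifting identity \eqref{eq:4} from Lemma \ref{lm:1}, and semi-commutativity at an intermediate stage; both arguments then obtain \eqref{A3R} via Remark \ref{rm:2}. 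Your version buys simplicity and transparency about where each hypothesis is used: \eqref{A3L} needs only associativity and \eqref{eq:23}, while semi-commutativity enters solely to supply the companion identity $[yyx]=\bar{x}$ (equivalently, to invoke Remark \ref{rm:2}) for \eqref{A3R}. Your direct verification of \eqref{A3R} by the same collapse to $[a\bar{c}d]$ is also correct.
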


\begin{proof}
By Remark \ref{rm:2}, it suffices to check only one condition, e.g. \eqref{A3L}. By \eqref{eq:23}, Lemma \ref{lm:1} and semi-commutativity we have
\begin{align*}
&[[ab[bcd]][bcd]d]=[[abb][cdb][cdd]]=[\bar{a}[cdb]\bar{c}]=\\
&[\bar{c}[cdb]\bar{a}]=[db\bar{a}]=[ab\bar{d}]=[ab[ccd]]=[[abc]cd].
\end{align*}

\end{proof}
\begin{de}
A ternary group $(A,[\, ])$ which satisfies \eqref{A3L} and \eqref{A3R} is called a \emph{knot-theoretic ternary group}.
\end{de}

\begin{lemma}\label{cha}
Let $(A,[\, ])$ be a knot-theoretic ternary group.
Then $(A,[\, ])$ is semi-commutative and satisfies \eqref{eq:23}.
\end{lemma}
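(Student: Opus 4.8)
The plan is to first squeeze both axioms down to statements about ``doubled'' products using associativity and the cancellation laws, then derive semi-commutativity as a separate, harder step. For \eqref{A3R}, associativity rewrites the left side $[ab[bcd]]$ as $[[abb]cd]$ and the right side $[a[abc][[abc]cd]]$ as $[[a[abc][abc]]cd]$, so left cancellativity of the common suffix $(c,d)$ leaves $[abb]=[a[abc][abc]]$. Since $c\mapsto[abc]$ is a bijection by right solvability \eqref{eq:r}, this says that for each fixed first entry the value $[auu]$ does not depend on $u$; evaluating with first entry $\bar a$ and using $[\bar a aa]=a$ from \eqref{sk:1} pins the constant down and yields $[abb]=\bar a$, which is \eqref{eq:23}. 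A mirror-image computation on \eqref{A3L} (rewriting both sides as $[ab[ccd]]$ and $[ab[[bcd][bcd]d]]$, cancelling the common prefix $(a,b)$ by \eqref{eq:r}, using that $b\mapsto[bcd]$ is a bijection together with $[aa\bar a]=a$) gives the companion identity $[aab]=\bar b$, equivalently $[bba]=\bar a$.

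Together $[abb]=\bar a$ and $[bba]=\bar a$ are exactly condition \eqref{eq:22}, so Lemma \ref{lm:1} supplies \eqref{eq:4}: a single bar may be placed on any one of the three arguments without changing the product. I will record the immediate consequence that flipping bars on an \emph{even} number of arguments leaves a product unchanged (apply \eqref{eq:4} first to the triple $(a,b,c)$ and then to $(\bar a,b,c)$), so that $[abc]=[\bar a\bar b c]=[\bar a b\bar c]=[a\bar b\bar c]$, while flipping an odd number always returns the common single-bar value. At this point $(A,[\,])$ satisfies \eqref{eq:23}, and it remains only to prove semi-commutativity, which is the main obstacle: equation \eqref{eq:4} alone governs bars but says nothing about the order of the arguments, so I must extract one further relation from the axioms. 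Setting $a=b$ in \eqref{A3R} and simplifying the left side by $[aax]=\bar x$ and the right side by \eqref{sk:2} in the form $[\bar c cd]=d$ gives $\overline{[acd]}=[a\bar c d]$, and \eqref{eq:4} turns this into $\overline{[acd]}=[\bar a cd]$. Comparing with the skew-of-a-product formula $\overline{[acd]}=[\bar d\bar c\bar a]$ from \eqref{sk:3} and collapsing the three bars on $[\bar d\bar c\bar a]$ to a single bar via \eqref{eq:4}, I obtain $[\bar a bc]=[\bar c ba]$ for all $a,b,c$.

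To finish I apply \eqref{sk:3} to both sides of $[\bar a bc]=[\bar c ba]$: the left becomes $[\bar c\bar b a]$ and the right becomes $[\bar a\bar b c]$, whence $[\bar c\bar b a]=[\bar a\bar b c]$. Each side carries two bars, so the even-flip consequence of \eqref{eq:4} identifies them with $[cba]$ and $[abc]$ respectively, giving $[abc]=[cba]$, as required. The routine part is the associativity bookkeeping in the first paragraph; the two genuinely delicate points are converting ``independent of the repeated entry'' into the exact value $\bar a$ (which needs solvability together with the correct instance of \eqref{sk:1}), and tracking bars accurately through the skew-reversal \eqref{sk:3} in the last step, where the order reversal built into \eqref{sk:3} is precisely what produces semi-commutativity.
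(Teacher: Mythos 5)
Your proof is correct, and its first half is the same idea as the paper's: regroup both sides of each of \eqref{A3L} and \eqref{A3R} by associativity, cancel, and extract the doubled-product identities \eqref{eq:22}. The execution differs in a nice way --- the paper specializes variables (e.g.\ $d=c$ in \eqref{A3L}) and cancels, whereas you keep the axioms in full generality and use surjectivity of the translations $c\mapsto[abc]$ and $b\mapsto[bcd]$ to conclude that $[auu]$ is constant in $u$, then pin the constant with $[\bar aaa]=a$ and $\bar{\bar a}=a$; both are sound. Where you genuinely diverge is the semi-commutativity step: you return to \eqref{A3R} with $a=b$ to extract the extra relation $\overline{[acd]}=[a\bar cd]$ and then combine it with \eqref{sk:3} and \eqref{eq:4}. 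This works, but it is more than is needed: once \eqref{eq:22} is in hand, semi-commutativity already follows without any further appeal to the knot-theoretic axioms, via
\[
[abc]=[\overline{[abc]}aa]=[[\bar c\bar b\bar a]aa]=[\bar c\bar b[\bar aaa]]=[\bar c\bar ba]=[cba],
\]
which is the paper's route (using only \eqref{eq:23}, \eqref{sk:1}, \eqref{sk:3}, associativity, and Lemma \ref{lm:1}). So the paper in effect proves the slightly stronger fact that any ternary group satisfying \eqref{eq:22} is semi-commutative, which is what makes Corollary \ref{cor:knot} a clean equivalence; your argument reaches the same conclusion but leaves that sharper statement implicit.
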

\begin{proof}

By associativity, \eqref{A3L}, \eqref{sk:1}, middle and right cancellation properties we obtain
\begin{align*}
&[ab[ccc]]=[[abc]cc]=[[ab[bcc]][bcc]c]=[a[b[bcc]b][ccc]]\; \Rightarrow\;\\
&b=[b[bcc]b]=[bb[ccb]] \; \Rightarrow\; [ccb]=\bar{b}.
\end{align*}

Similarly, by associativity, \eqref{A3R}, \eqref{sk:1}, middle and left cancellation properties we obtain
\begin{align*}
[[bbb]cd]=[bb[bcd]]=[b[bbc][[bbc]cd]]=[[bbb][c[bbc]c]d]\; \Rightarrow\; c=[c[bbc]c]=[[cbb]cc] \; \Rightarrow\; [cbb]=\bar{c}.
\end{align*}
Finally, by Lemma \ref{lm:1}
\begin{align*}
[abc]=[\overline{[abc]}aa]=[[\bar{c}\bar{b}\bar{a}]aa]=[\bar{c}\bar{b}[\bar{a}aa]]=
[\bar{c}\bar{b}a]=[cba],
\end{align*}
which means that $(A,[\, ])$ is semi-commutative.
\end{proof}

\begin{corollary}\label{cor:knot}
Let $(A,[\, ])$ be a ternary group. $(A,[\, ])$ is a knot-theoretic ternary group
if and only if
$(A,[\, ])$ is semi-commutative and satisfies \eqref{eq:23}. In particular, in each knot-theoretic group
\begin{align*}
[ccc]=\bar{c},
\end{align*}
for every $c\in A$.
\end{corollary}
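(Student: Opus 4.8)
The plan is to observe that this corollary is essentially a bookkeeping combination of the two lemmas proved immediately above, together with a one-line specialization for the final assertion. I would argue the equivalence by treating the two implications separately. For the ``only if'' direction, I would invoke Lemma~\ref{cha}, which states exactly that every knot-theoretic ternary group is semi-commutative and satisfies \eqref{eq:23}. For the ``if'' direction, I would appeal to the lemma stated just before the definition of a knot-theoretic ternary group: it establishes that any semi-commutative ternary group obeying \eqref{eq:23} satisfies both \eqref{A3L} and \eqref{A3R}, which is precisely the defining property. Chaining these two facts yields the claimed if-and-only-if characterization.

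For the concluding ``in particular'' statement, I would specialize the identity \eqref{eq:23}. Knowing that a knot-theoretic ternary group satisfies $[abb]=\bar{a}$ for all $a,b\in A$, I would set $a=b=c$ to obtain $[ccc]=\bar{c}$ for every $c\in A$, with no further computation required.

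Because all the genuine algebraic content already resides in the two preceding lemmas, I do not anticipate any real obstacle in carrying out this proof. The single point meriting attention is presentational: the lemma supplying the reverse implication is unlabeled, so I would cite it descriptively (``by the lemma preceding the definition'') or else assign it a label beforehand, to keep the reference unambiguous.
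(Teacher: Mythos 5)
Your proposal is correct and matches the paper's (implicit) argument exactly: the corollary is stated without a separate proof precisely because it is the conjunction of Lemma~\ref{cha} (the ``only if'' direction) and the unlabeled lemma preceding the definition of knot-theoretic ternary groups (the ``if'' direction), with the final identity obtained by putting $a=b=c$ in \eqref{eq:23}. No gaps.
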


\begin{example}\label{ex:Zk}
Let $k>1$ be a natural number. Consider a cyclic group $(\Z_k,+)$ and let $a$ be a fixed element of $\Z_k$. Define on the set $\Z_k$ the ternary operation $[xyz]=x-y+z+a\pmod k$. Then $(\Z_k,[\,])$ is a knot-theoretic ternary group if and only if $2a=0 \pmod k$ in $\Z_k$. For each even $k$, there are exactly two knot-theoretic groups constructed in this way: \begin{enumerate}
\item idempotent one for $a=0$,
\item non-idempotent for $a$ being the element of order 2 in $\Z_k$.
\end{enumerate}
\end{example}
\section{Idempotent ternary groups}\label{S:Mal}

By results of Section \ref{sec:2} one obtains the relation between idempotent (knot-theoretic) ternary groups and Mal'cev ones.
\begin{corollary} \label{idMalkt}
Let $(A,[\, ])$ be a ternary group. The following conditions are equivalent:
\begin{enumerate}
\item  $(A,[\, ])$ is idempotent,
\item $(A,[\, ])$ is idempotent knot-theoretic,
\item $(A,[\, ])$ is Mal'cev.
\end{enumerate}
\end{corollary}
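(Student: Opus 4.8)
The plan is to set up the equivalences around the single crucial observation that idempotency collapses the skew map to the identity, so that conditions (1), (2), (3) all reduce to the already-established identities \eqref{sk:2} and Corollary \ref{cor:knot}.

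First I would prove (1) $\Rightarrow$ (3). By Remark \ref{rm:1}, idempotency of $(A,[\, ])$ is equivalent to $\bar{a}=a$ for every $a\in A$. Substituting $\bar{a}=a$ into the identities \eqref{sk:2}, namely $[ba\bar{a}]=[\bar{a}ab]=b$, immediately yields $[baa]=b=[aab]$ for all $a,b\in A$. After renaming variables these are exactly the two Mal'cev equations $[abb]=a$ and $[bba]=a$, so $(A,[\, ])$ is Mal'cev. The reverse implication (3) $\Rightarrow$ (1) is recorded in the first sentence of Remark \ref{rm:1}: every Mal'cev algebra is idempotent (equivalently, set $a=b$ in $[abb]=a$). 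This establishes (1) $\Leftrightarrow$ (3).

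It then remains to fold in condition (2). The implication (2) $\Rightarrow$ (1) is trivial, since an idempotent knot-theoretic ternary group is idempotent by definition. For (1) $\Rightarrow$ (2) I would invoke Corollary \ref{cor:knot}, which characterizes knot-theoretic ternary groups as precisely the semi-commutative ones satisfying \eqref{eq:23}. Semi-commutativity of an idempotent ternary group is supplied by the last part of Remark \ref{rm:1}, and \eqref{eq:23}, i.e. $[abb]=\bar{a}$, follows by combining the Mal'cev identity $[abb]=a$ obtained above with $\bar{a}=a$. Hence $(A,[\, ])$ is knot-theoretic, and being idempotent it is idempotent knot-theoretic, giving (2).

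I do not anticipate any genuine obstacle: once one notices that idempotency forces $\bar{a}=a$, every implication unwinds mechanically from \eqref{sk:2} and Corollary \ref{cor:knot}. The only point requiring mild care is to verify that \eqref{sk:2} really yields \emph{both} Mal'cev equations under $\bar{a}=a$ (not just one), which is clear since the chain $[ba\bar{a}]=[b\bar{a}a]=[a\bar{a}b]=[\bar{a}ab]=b$ produces the two distinct words $[baa]$ and $[aab]$.
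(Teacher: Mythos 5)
Your proof is correct and follows exactly the route the paper intends: the corollary is stated without an explicit proof as a consequence of ``results of Section \ref{sec:2}'', and the results needed are precisely the ones you invoke, namely Remark \ref{rm:1} (idempotency $\Leftrightarrow$ $\bar{a}=a$, Mal'cev $\Rightarrow$ idempotent, idempotent $\Rightarrow$ semi-commutative), the identities \eqref{sk:2}, and Corollary \ref{cor:knot}. Nothing is missing.
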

But it is also possible to obtain an idempotent (knot-theoretic) ternary group from any semi-commutative ternary group. The aim of this Section is to provide details of such construction.

\begin{lemma} \label{multiex}
Let $(A,[\, ])$ be a ternary group, and define a new (derived) ternary operation by
\[
P(a,b,c)=[a\bar{b}c],
\]
for $a,b,c\in A$. Then $(A,P)$ is a Mal'cev algebra which satisfies for every $a, b, c, d\in A$ the conditions
\eqref{A3L} and \eqref{A3R}.
\end{lemma}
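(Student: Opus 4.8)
The plan is to treat the two assertions separately: the Mal'cev property is immediate, and the knot-theoretic axioms reduce, on each side, to a single short word in $[\,]$.

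First I would record the Mal'cev property. For all $a,b\in A$,
\[
P(a,b,b)=[a\bar{b}b]=a,\qquad P(b,b,a)=[b\bar{b}a]=a,
\]
directly from \eqref{sk:2}. In particular $(A,P)$ is idempotent by the first sentence of Remark \ref{rm:1}.

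The real content is \eqref{A3L} and \eqref{A3R} read with $P$ in place of $[\,]$. Note that I cannot invoke Remark \ref{rm:2} or Corollary \ref{cor:knot} here, because $P$ is in general neither semi-commutative (that would force $[a\bar{b}c]=[c\bar{b}a]$) nor associative; so both identities genuinely have to be verified. My method is to substitute $P(a,b,c)=[a\bar{b}c]$ throughout and then collapse each expression using only associativity of $[\,]$ together with the skew identities \eqref{sk:2}--\eqref{sk:4}. For \eqref{A3L} the left-hand side unfolds as
\[
P(P(a,b,c),c,d)=[[a\bar{b}c]\bar{c}d]=[a\bar{b}[c\bar{c}d]]=[a\bar{b}d].
\]
For the right-hand side I would first simplify $P(a,b,P(b,c,d))=[a\bar{b}[b\bar{c}d]]=[a\bar{c}d]$; then, writing $w=[b\bar{c}d]$ and using \eqref{sk:3}--\eqref{sk:4} to get $\bar{w}=[\bar{d}c\bar{b}]$, reassociate
\[
P([a\bar{c}d],w,d)=[[a\bar{c}d]\bar{w}d]=[a\bar{c}[d\bar{w}d]]=[a\bar{c}[c\bar{b}d]]=[a\bar{b}d],
\]
so that both sides equal $[a\bar{b}d]$. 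The argument for \eqref{A3R} is entirely parallel and collapses both sides to $[a\bar{c}d]$.

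The only delicate point is the bookkeeping inside these reductions: at each step one must push the skew of a compound term through a bracket via $\overline{[xyz]}=[\bar{z}\bar{y}\bar{x}]$ and then choose parentheses so that a block of the form $[x\bar{x}y]$ or $[\bar{x}xy]$ appears and is cancelled by \eqref{sk:2}. I expect this to be the main (purely computational) obstacle, precisely because $P$ lacks the symmetry that in the semi-commutative case lets one deduce one axiom from the other. As motivation, and as a sanity check, I would point out the conceptual reason behind the two target words: fixing any $e\in A$ and setting $a\cdot c=[a\bar{e}c]$ makes $(A,\cdot)$ a group in which $P(a,b,c)=ab^{-1}c$; in that language \eqref{A3L} reads $ab^{-1}d=ab^{-1}d$ and \eqref{A3R} reads $ac^{-1}d=ac^{-1}d$, which is exactly why the two sides must agree.
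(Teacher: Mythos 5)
Your proof is correct and follows essentially the same route as the paper: the paper simply records the two cancellation identities $P(P(x,y,z),z,t)=P(x,y,t)$ and $P(x,y,P(y,z,t))=P(x,z,t)$ (each proved by exactly your associativity-plus-skew reductions), from which both sides of \eqref{A3L} collapse to $P(a,b,d)$ and both sides of \eqref{A3R} to $P(a,c,d)$. The only difference is organizational: applying the first identity directly to the outer layer of the right-hand side of \eqref{A3L} (with $z=P(b,c,d)$) avoids your explicit computation of $\bar{w}$, but your longer calculation reaches the same result.
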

\begin{proof}
It is easy to show that in the algebra $(A,P)$ the following identities hold
\begin{align}
P(P(x,y,z),z,t)&=P(x,y,t),\label{eq:1M}\\
P(x,y,P(y,z,t))&=P(x,z,t)\label{eq:2M},
\end{align}
for all $x,y,z,t\in A$.
\end{proof}
In fact, the identities \eqref{eq:1M}, \eqref{eq:2M} are satisfied in the algebra $(A,P)$ independently from the order of variables in the definition of the operation $P$, e.g. one can also take $P(a,b,c)=[b\bar{a}c]$, etc.

\begin{lemma}\label{lm:semi}
Let $(A,[\, ])$ be a ternary group, and let
\[
P(a,b,c)=[a\bar{b}c].
\]
Then $(A,[\, ])$ is semi-commutative if and only if $(A,P)$ is a ternary semigroup.
\end{lemma}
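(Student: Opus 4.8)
The plan is to unwind the defining condition of a ternary semigroup for $(A,P)$ into a single identity in $(A,[\,])$ and compare it directly with semi-commutativity. Recall that $(A,P)$ is a ternary semigroup precisely when the three bracketings $P(P(a,b,c),d,e)$, $P(a,P(b,c,d),e)$, $P(a,b,P(c,d,e))$ coincide for all $a,b,c,d,e$. First I would observe that the outer two already agree for free: since $P(a,b,c)=[a\bar b c]$, we get $P(P(a,b,c),d,e)=[[a\bar b c]\bar d e]$ and $P(a,b,P(c,d,e))=[a\bar b[c\bar d e]]$, and these are equal by the associativity of $[\,]$ alone, with no further hypothesis. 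Hence being a ternary semigroup is equivalent to the single remaining identity $P(a,P(b,c,d),e)=P(a,b,P(c,d,e))$.

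Next I would compute the middle term. Using \eqref{sk:3} and \eqref{sk:4}, the skew of $[b\bar c d]$ is $\overline{[b\bar c d]}=[\bar d\,\overline{\bar c}\,\bar b]=[\bar d c\bar b]$, so $P(a,P(b,c,d),e)=[a\,\overline{[b\bar c d]}\,e]=[a[\bar d c\bar b]e]$. On the other hand, by associativity $P(a,b,P(c,d,e))=[a\bar b[c\bar d e]]=[a[\bar b c\bar d]e]$. Thus the ternary-semigroup condition is exactly
\begin{align*}
[a[\bar d c\bar b]e]=[a[\bar b c\bar d]e]\quad\text{for all }a,b,c,d,e\in A.
\end{align*}

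For the forward direction, assuming $[\,]$ is semi-commutative, I would apply semi-commutativity inside the middle argument, $[\bar d c\bar b]=[\bar b c\bar d]$, which immediately turns the left-hand side into the right-hand side; so $P$ is associative. For the converse, assuming $(A,P)$ is a ternary semigroup, I would read the displayed identity as an equality $[a\,U\,e]=[a\,V\,e]$ with common outer entries $a$ and $e$, and cancel the middle slot by middle-cancellativity (guaranteed by \eqref{eq:m}), obtaining $[\bar d c\bar b]=[\bar b c\bar d]$ for all $b,c,d$. Since the skew map $x\mapsto\bar x$ is a bijection of $A$ (it is an involution by \eqref{sk:4}), setting $x=\bar d$, $y=c$, $z=\bar b$ lets $x,z$ range over all of $A$, yielding $[xyz]=[zyx]$, i.e.\ semi-commutativity.

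The computations themselves are short; the main thing to get right is the bookkeeping, namely keeping track of which of the three associativity instances for $P$ is automatic (the outer one, from associativity of $[\,]$) versus which one carries the whole content (the equality with the middle bracketing), and correctly simplifying the skew term via \eqref{sk:3}--\eqref{sk:4}. The one genuinely load-bearing move is recognizing that the remaining identity collapses, via middle-cancellation, to exactly $[\bar d c\bar b]=[\bar b c\bar d]$, which under the involution $x\mapsto\bar x$ is literally the semi-commutativity law.
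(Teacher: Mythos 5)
Your proposal is correct and follows essentially the same route as the paper: the outer two bracketings of $P$ agree automatically by associativity of $[\,]$, the middle bracketing reduces via \eqref{sk:3}--\eqref{sk:4} and associativity to the identity $[\bar d c\bar b]=[\bar b c\bar d]$, and middle cancellation plus the bijectivity of the skew map identifies this with semi-commutativity. The only difference is cosmetic (you spell out the final bijection step that the paper leaves implicit).
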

\begin{proof}
For  every $a,b,c,d,e\in A$ we have:
\begin{align*}
P(P(a,b,c),d,e) &=[[a\bar{b}c]\bar{d}e]=[a\bar{b}[c\bar{d}e]]=P(a,b,P(c,d,e)).
\end{align*}
Moreover,
\begin{align*}
P(a,P(b,c,d),e) &=P(P(a,b,c),d,e) \; \Leftrightarrow \;
[a\overline{[b\bar{c}d]}e]=[[a\bar{b}c]\bar{d}e]\; \Leftrightarrow \; \\
[a[\bar{d}c\bar{b}]e]&=[a[\bar{b}c\bar{d}]e]\; \Leftrightarrow \;[\bar{d}c\bar{b}]=[\bar{b}c\bar{d}].
\end{align*}
\end{proof}

\begin{corollary}

Let $(A,[\, ])$ be a semi-commutative ternary group, and let
\[
P(a,b,c)=[a\bar{b}c].
\]
Then $(A,P)$ is a Mal'cev ternary group.

\end{corollary}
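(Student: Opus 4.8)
The plan is to assemble the two preceding lemmas and supply the single missing ingredient, namely the quasigroup property of $P$. To qualify as a Mal'cev ternary group, $(A,P)$ must be (i) a ternary group, that is, an associative ternary quasigroup, and (ii) Mal'cev, that is, $P(a,b,b)=P(b,b,a)=a$. Both (ii) and the associativity half of (i) are already available: by Lemma \ref{multiex} the algebra $(A,P)$ is a Mal'cev algebra, which gives $P(a,b,b)=[a\bar{b}b]=a$ and $P(b,b,a)=[b\bar{b}a]=a$ directly from \eqref{sk:2}; and since $(A,[\, ])$ is assumed semi-commutative, Lemma \ref{lm:semi} yields that $(A,P)$ is a ternary semigroup, i.e. $P$ is associative.

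First I would therefore concentrate on the quasigroup property, the only part not handed to us by the lemmas. Unwinding the definition $P(x,y,z)=[x\bar{y}z]$, the equation $P(z,a,b)=c$ reads $[z\bar{a}b]=c$ and is uniquely solvable for $z$ by the left-solvability \eqref{eq:l} of $[\, ]$; likewise $P(a,b,z)=c$ reads $[a\bar{b}z]=c$ and is uniquely solvable by the right-solvability \eqref{eq:r}.

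The middle case is the point that needs a small extra observation, and I expect it to be the only genuine obstacle. The equation $P(a,z,b)=c$ becomes $[a\bar{z}b]=c$; by the middle-solvability \eqref{eq:m} there is a unique $y\in A$ with $[ayb]=c$, but one must still recover $z$ from $\bar{z}=y$. Here I would invoke \eqref{sk:4}, which states $\bar{\bar{a}}=a$, so that the skew map $a\mapsto\bar{a}$ is an involution and hence a bijection of $A$; consequently $z=\bar{y}$ is the unique solution. This completes the verification that $(A,P)$ is a ternary quasigroup.

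Finally I would combine the pieces: $(A,P)$ is an associative ternary quasigroup, hence a ternary group by definition, and it satisfies the Mal'cev identities, so it is a Mal'cev ternary group, as claimed. Note that the semi-commutativity hypothesis enters only through Lemma \ref{lm:semi}, to secure associativity; everything else is a formal consequence of $(A,[\, ])$ being a ternary group.
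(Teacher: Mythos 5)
Your proposal is correct and follows essentially the same route as the paper: the quasigroup property of $(A,P)$ is deduced from that of $(A,[\, ])$ via the involutivity (hence bijectivity) of the skew map, and associativity comes from Lemma \ref{lm:semi} using semi-commutativity. You merely spell out the three solvability cases and the Mal'cev identities in more detail than the paper does.
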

\begin{proof}
Since $\bar{\bar{x}}=x$, the map $x\mapsto\bar{x}$ is $1-1$, and the quasigroup property of $(A,P)$ follows from the fact that $(A,[\, ])$ is a ternary quasigroup. By Lemma \ref{lm:semi}, the ternary quasigroup $(A,P)$ is a ternary group.
\end{proof}

\begin{example}\label{Zkcont_1}
Consider the two types of knot-theoretic ternary groups introduced in Example \ref{ex:Zk}.
For an even $k$, we take the cyclic group $(\Z_k,+)$ with a ternary operation:
\[
[xyz]=x-y+z+\frac{k}{2} \pmod k,
\]
and a unary operation
\[
\bar{y}=y+\frac{k}{2} \pmod k
\]
for any $x$, $y$, $z\in \Z_k$.
Now define
\[
P(x,y,z)=x-\bar{y}+z+\frac{k}{2}=x-y+z \pmod k,
\]
and we obtain the idempotent ternary group $(\Z_k,P)$.
\end{example}
\section{Isomorphisms of knot theoretic ternary groups}\label{S:isom}

The aim of this section is to recognize when two knot theoretic ternary groups are isomorphic. We start with formulating representation theorems for 
knot theoretic ternary groups.
\begin{de}
Let $(A,[\,])$ be a ternary algebra.
A binary groupoid $(A,*)$, where $x*y=[xay]$ for some fixed $a\in A$, is called a
{\it retract} of $(A,[\,])$, and is denoted by $ret_a(A,[\,])$.
\end{de}

\begin{remark}\label{rm:DG}
It is well known (see e.g. \cite{DudGla}) that a retract $ret_a(A,[\,])$ of a semi-commutative ternary group is an abelian group with the neutral element $\bar{a}$ and the inverse of $x\in A$ given by $\overline{[axa]}$.
\end{remark}
\begin{lemma}\cite[Corollary 15]{BorDuDu} \label{simpdescr}
Let $(A,[\, ])$ be a semi-commutative ternary group and let $a$ be a fixed element of $A$. Then for an abelian group $(A,+)=ret_a(A,[\,])$ and its involutive automorphism $\phi(x)=[\bar{a}xa]$, one has
\[
[xyz]=x+\phi(y)+z+b,
\]
where $b=[\bar{a}\bar{a}\bar{a}]$.
\end{lemma}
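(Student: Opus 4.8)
The plan is to work entirely inside the abelian group $(A,+)=ret_a(A,[\,])$ supplied by Remark \ref{rm:DG}: here $x+y=[xay]$, the neutral element is $\bar a$, and one may use associativity and commutativity of $+$ freely. With this in hand the statement reduces to two verifications — that $\phi(x)=[\bar a x a]$ is an involutive automorphism, and that the displayed formula holds — and both are carried out by rewriting ternary words using associativity of $[\,]$ together with the skew identities \eqref{sk:1} and \eqref{sk:2}, with semi-commutativity entering at exactly one point in each.

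First I would check that $\phi(x)=[\bar a x a]$ is an automorphism of $(A,+)$. By associativity $\phi(x+y)=[\bar a[xay]a]=[[\bar a xa]ya]$, while $\phi(x)+\phi(y)=[[\bar a xa]a[\bar a ya]]$ collapses to the same element once the inner block $[aa\bar a]=a$ is removed using \eqref{sk:1}; thus $\phi$ is an endomorphism. Flattening $\phi(\phi(x))=[\bar a[\bar a xa]a]$ to $[\bar a\bar a[xaa]]$, rewriting $[xaa]=[aax]$ by semi-commutativity, and then reducing $[\bar a\bar a[aax]]=[\bar a[\bar a aa]x]=[\bar a ax]=x$ by \eqref{sk:1} and \eqref{sk:2}, shows $\phi^2=\mathrm{id}$; in particular $\phi$ is an involutive automorphism. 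This swap $[xaa]=[aax]$ is the single place semi-commutativity enters the automorphism claim.

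The core step is the direct verification of the formula. Unwinding the right-hand side, each $+$ becomes an application of $[\,]$ with middle entry $a$; substituting $\phi(y)=[\bar a ya]$ and $b=[\bar a\bar a\bar a]$, and using $[xa[\bar a ya]]=[[xa\bar a]ya]=[xya]$ (via \eqref{sk:2}), gives
\[
x+\phi(y)+z+b=[[[xya]az]a[\bar a\bar a\bar a]].
\]
Pushing the outermost operation inward by associativity, and noting $[za[\bar a\bar a\bar a]]=[[za\bar a]\bar a\bar a]=[z\bar a\bar a]$ (again \eqref{sk:2}), the expression becomes $[[xya]a[z\bar a\bar a]]=[xy[aa[z\bar a\bar a]]]$. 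Now semi-commutativity rewrites the inner $[z\bar a\bar a]$ as $[\bar a\bar a z]$, whereupon $[aa[\bar a\bar a z]]=[[aa\bar a]\bar a z]=[a\bar a z]=z$ by \eqref{sk:1} and \eqref{sk:2}, and the whole right-hand side collapses to $[xyz]$, as required.

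The main obstacle is not conceptual but a matter of disciplined bookkeeping: repeatedly re-parenthesizing nested ternary brackets without error, and spotting the one semi-commutative swap in each computation that lets the spurious copies of $a$ cancel against $\bar a$. Conceptually the lemma is the Hossz\'u--Gluskin representation specialized to the semi-commutative setting — for a general ternary group the retract is only a (possibly non-abelian) group and the associated $\phi$ need not be involutive; semi-commutativity is precisely what forces $(A,+)$ to be abelian and $\phi^2=\mathrm{id}$, pinning down the clean normal form above.
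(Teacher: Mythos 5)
Your proof is correct. Note, however, that the paper does not prove this lemma at all: it is quoted verbatim as \cite[Corollary 15]{BorDuDu}, so there is no in-paper argument to compare against. What you have produced is a self-contained verification, and every step checks out: the endomorphism property of $\phi$ reduces, via generalized associativity, to cancelling $[aa\bar a]=a$ inside $[[\bar axa]a[\bar aya]]$ (both this word and $[\bar a[xay]a]$ flatten to $[[\bar axa]ya]$); involutivity of $\phi$ needs exactly the one semi-commutative swap $[xaa]=[aax]$ you identify, after which \eqref{sk:1} and \eqref{sk:2} finish; and the main computation $x+\phi(y)+z+b=[[[xya]az]a[\bar a\bar a\bar a]]=[[xya]a[z\bar a\bar a]]=[xy[aa[\bar a\bar az]]]=[xyz]$ is a legitimate chain of reparenthesizations and skew-element cancellations. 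Your closing remark is also apt: this is the Hossz\'u--Gluskin normal form, with semi-commutativity being what makes the retract abelian and $\phi$ involutive (compare Remark \ref{rm:DG}). The only cosmetic imprecision is the phrase about removing ``the inner block $[aa\bar a]=a$'' in the endomorphism check --- the cancellation there can equally be organized via $[a\bar ay]=y$ --- but either bookkeeping yields $[[\bar axa]ya]$, so nothing is at stake.
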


\begin{theorem}\label{cor:inv}
Let $(A,[\, ])$ be a ternary group. The following conditions are equivalent
\begin{enumerate}
\item$(A,[\, ])$ is a knot-theoretic ternary group;
\item there is an abelian group
$(A,+)$ with the neutral element  $e\in A$, such that
\begin{align*}
[xyz]=x-y+z+\bar{e},
\end{align*}
and the mapping \, $\bar{}\colon A\longrightarrow A$ is an involutive bijection such that for every $x\in A$, $\bar{x}=x+\bar{e}$.

\end{enumerate}
\end{theorem}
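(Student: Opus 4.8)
The plan is to exploit Corollary~\ref{cor:knot}, which reduces being knot-theoretic to semi-commutativity together with \eqref{eq:23}, and to feed this into the structural description of semi-commutative ternary groups supplied by Lemma~\ref{simpdescr}. The whole argument then comes down to pinning down the automorphism $\phi$ and the constant $b$ occurring in that description, and matching the neutral element of the retract with the element $e$ in the statement.

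For the direction $(1)\Rightarrow(2)$, I would fix an arbitrary $a\in A$ and form the retract $(A,+)=ret_a(A,[\,])$. Since $(A,[\,])$ is knot-theoretic, Corollary~\ref{cor:knot} makes it semi-commutative, so Lemma~\ref{simpdescr} applies and yields
\[
[xyz]=x+\phi(y)+z+b,\qquad \phi(y)=[\bar a y a],\quad b=[\bar a\bar a\bar a],
\]
where $(A,+)$ is abelian with neutral element $\bar a$ and with the inverse of $x$ equal to $\overline{[axa]}$ (Remark~\ref{rm:DG}). The key step is to identify $\phi$ with group inversion. For this I would first note that \eqref{eq:22} holds (from \eqref{eq:23} and semi-commutativity), so Lemma~\ref{lm:1} gives \eqref{eq:4}; combining \eqref{eq:4} with $\overline{[aya]}=[\bar a\bar y\bar a]$ from \eqref{sk:3} and $\bar{\bar y}=y$ from \eqref{sk:4} should produce the chain
\[
\phi(y)=[\bar a y a]=[ay\bar a]=[\bar a\bar y\bar a]=\overline{[aya]},
\]
which identifies $\phi(y)$ with the inverse $-y$ in $(A,+)$, so that $[xyz]=x-y+z+b$. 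Writing $e=\bar a$ for the neutral element, I would then read off $b=[\bar a\bar a\bar a]=\overline{\bar a}=a=\bar e$ using Corollary~\ref{cor:knot} and \eqref{sk:4}, giving $[xyz]=x-y+z+\bar e$; finally $x+\bar e=[xaa]=\bar x$ by \eqref{eq:23} confirms $\bar x=x+\bar e$, with involutivity and bijectivity of $\bar{}$ coming from \eqref{sk:4}.

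For the direction $(2)\Rightarrow(1)$, I would argue directly from the formula: commutativity of $+$ gives $[zyx]=z-y+x+\bar e=[xyz]$, hence semi-commutativity, while $[abb]=a-b+b+\bar e=a+\bar e=\bar a$ is exactly \eqref{eq:23}. Corollary~\ref{cor:knot} then immediately yields that $(A,[\,])$ is knot-theoretic.

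The hard part will be the verification that $\phi=-\mathrm{id}$, that is, that the involutive automorphism handed to us by Lemma~\ref{simpdescr} is forced to be inversion precisely in the knot-theoretic case; everything else is routine bookkeeping with the skew relations \eqref{sk:1}--\eqref{sk:4} and the retract definitions. I would also remark, as a consistency check, that involutivity of $\bar{}$ forces $2\bar e=0$ in $(A,+)$, since $x=\bar{\bar x}=x+2\bar e$, and that this is exactly the condition making $x-y+z+\bar e$ associative, so the two descriptions match with no hidden compatibility requirement.
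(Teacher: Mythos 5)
Your proposal is correct and follows essentially the same route as the paper: retract at a point (your $a$ is the paper's $\bar e$), invoke Remark \ref{rm:DG} and Lemma \ref{simpdescr}, identify $\phi$ with inversion via \eqref{eq:4} and the skew relations, and read off $b=[eee]=\bar e$ from Corollary \ref{cor:knot}. The converse direction likewise matches the paper's (terser) verification, and your closing observation that involutivity of $\bar{\ }$ forces $2\bar e=0$, which is exactly what makes the formula associative, is a worthwhile explicit check that the paper leaves implicit.
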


\begin{proof}
Let $(A,[\, ])$ be a knot-theoretic ternary group. Then it is enough to take $(A,+)=ret_{\bar{e}}(A,[\, ])$, which, by Remark \ref{rm:DG}, is an abelian group, with the neutral element $\bar{\bar{e}}=e$, and $-x=[e\bar{x}e]$.
Clearly, $x+\bar{e}=[x\bar{e}\bar{e}]=\bar{x}$. The statement follows because
$\phi(y)=[\bar{\bar{e}}y\bar{e}]=[e\bar{y}e]=-y$ and $b=[eee]=\bar{e}$.

Now let $(A,+)$ be an abelian group with the neutral element  $e\in A$ and an involutive bijection $\bar{}\colon A\longrightarrow A$ such that
for every $x\in A$, $x+\bar{e}=\bar{x}$. It is evident that $(A,[\,])$ with $[xyz]=x-y+z+\bar{e}$, is a knot-theoretic ternary group.
\end{proof}
\begin{corollary}\label{cor:count}
Each knot-theoretic ternary group $(A,[\, ])$ is determined by an abelian group $(A,+)$ and an element $a\in A$ of order one or two in $(A,+)$.
Then for every $x,y,z\in A$
\[
[xyz]=x-y+z+a\quad {\rm and}\quad \bar{x}=x+a.
\]
In particular, for each $x,y\in A$
\begin{align}\label{eq:izo}
[xy\bar{x}]=x-y+\bar{x}+a=x-y+x+a+a=2x-y.
\end{align}
\end{corollary}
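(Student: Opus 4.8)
The plan is to read this statement as a repackaging of Theorem~\ref{cor:inv}, in which the single fixed element $a$ plays the role of $\bar{e}$ and the abstract involutive bijection is pinned down as the order condition $2a=e$. So the whole argument runs through Theorem~\ref{cor:inv} in both directions, and the only genuinely new content is extracting the numerical constraint on the order of $a$ from the involutivity of the skew map, plus the one-line substitution for \eqref{eq:izo}.

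First I would take a knot-theoretic ternary group $(A,[\,])$ and apply Theorem~\ref{cor:inv} to obtain an abelian group $(A,+)$ with neutral element $e$ such that $[xyz]=x-y+z+\bar{e}$ and $\bar{x}=x+\bar{e}$, where $\bar{}$ is an involutive bijection. I would set $a:=\bar{e}$, so that the two displayed formulas $[xyz]=x-y+z+a$ and $\bar{x}=x+a$ hold immediately. The key step is to exploit involutivity: computing $\bar{\bar{x}}=\bar{x}+a=x+2a$ and comparing with $\bar{\bar{x}}=x$ forces $2a=e$; hence $a$ has order one (when $a=e$) or order two (when $a\neq e$) in $(A,+)$.

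For the converse direction, starting from an abelian group $(A,+)$ and an element $a$ with $2a=e$, I would define $[xyz]:=x-y+z+a$ and verify that it satisfies condition~(2) of Theorem~\ref{cor:inv}. Taking the map $x\mapsto x+a$ as the candidate bar, the relation $2a=e$ gives $\bar{\bar{x}}=x+2a=x$, so this map is an involutive bijection; moreover $\bar{e}=e+a=a$, which matches the constant appearing in the operation and yields $\bar{x}=x+\bar{e}$. Theorem~\ref{cor:inv} then guarantees that $(A,[\,])$ is a knot-theoretic ternary group, completing the equivalence and confirming that its skew map is exactly $\bar{x}=x+a$.

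The final identity \eqref{eq:izo} is a direct substitution: using $\bar{x}=x+a$ and $2a=e$,
\[
[xy\bar{x}]=x-y+\bar{x}+a=x-y+(x+a)+a=2x-y+2a=2x-y.
\]
I do not expect any serious obstacle here; the argument is essentially bookkeeping on top of Theorem~\ref{cor:inv}. The one point requiring care is the bidirectional identification of constants, namely recognizing that the condition ``$\bar{}$ is an involutive bijection with $\bar{x}=x+\bar{e}$'' in Theorem~\ref{cor:inv} is precisely equivalent to ``$a=\bar{e}$ has order at most two,'' so that nothing is lost in passing between the two formulations.
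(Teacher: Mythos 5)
Your proposal is correct and matches the paper's (implicit) derivation: the corollary is stated as an immediate consequence of Theorem~\ref{cor:inv}, with the order condition $2a=e$ coming exactly from the involutivity $\bar{\bar{x}}=x+2a=x$ and the identity \eqref{eq:izo} being the same substitution the paper displays in the statement itself. No issues.
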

\begin{notation}
We denote the knot-theoretic ternary group $(A,[\, ])$ described in Corollary \ref{cor:count} by $\T((A,+),a)$ and we always assume that $(A,+)$ is an abelian group and $a$ is its fixed element of order one or two. Obviously, $ret_a(\T((A,+),a))=(A,+)$. The group $(A,+)$ is called the \emph{associated group} of $(A,[\, ])=\T((A,+),a)$.
\end{notation}
\begin{lemma}\label{lem:iz}
Let $(A,[\,])=\T((A,+),a)$ be a knot-theoretic ternary group. Then for arbitrary $b\in A$, the groups
$ret_b(A,[\, ])$ and $(A,+)$ are isomorphic.
\end{lemma}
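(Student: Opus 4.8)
The plan is to exploit the explicit coordinate description from Corollary~\ref{cor:count}. Write $(A,[\,])=\T((A,+),a)$, so that $[xyz]=x-y+z+a$ and $\bar{x}=x+a$, where $a$ has order one or two, i.e. $2a=e$ with $e$ the neutral element of $(A,+)$. The first step is simply to read off the binary operation of $ret_b(A,[\,])$ in these coordinates: for $x,y\in A$,
\[
x\circ y:=[xby]=x-b+y+a=x+y-b+a,
\]
the right-hand side being computed in the abelian group $(A,+)$. Thus $\circ$ is nothing more than the group operation $+$ translated by the fixed constant $a-b$; in other words, $ret_b(A,[\,])$ differs from the base group $(A,+)=ret_a(A,[\,])$ only by an affine shift.

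Next I would invoke Remark~\ref{rm:DG}, which already guarantees that $ret_b(A,[\,])$ is an abelian group, with neutral element $\bar{b}=b+a$. The natural candidate for an isomorphism onto $(A,+)$ is then the translation sending this neutral element to $e$, namely
\[
\psi\colon A\longrightarrow A,\qquad \psi(x)=x-\bar{b}=x-b-a.
\]
As a translation inside $(A,+)$, the map $\psi$ is automatically a bijection, and $\psi(\bar{b})=e$.

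It remains only to verify that $\psi$ carries $\circ$ to $+$, which is the routine computation
\[
\psi(x\circ y)=(x+y-b+a)-b-a=x+y-2b,
\]
while on the other side $\psi(x)+\psi(y)=(x-b-a)+(y-b-a)=x+y-2b-2a=x+y-2b$, using $2a=e$. Hence $\psi(x\circ y)=\psi(x)+\psi(y)$, and $\psi$ is the desired isomorphism $ret_b(A,[\,])\cong(A,+)$.

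I do not expect a genuine obstacle here: the entire content is the observation that a constant shift of an abelian group is isomorphic to the group via translation by the shifted neutral element, and the cancellation of the order-two element $a$ is exactly what makes the residual constant disappear. The single point that deserves to be stated carefully is that $\psi$ must respect the operations on the correct sides (it sends the retract multiplication $\circ$ to $+$), which the displayed computation confirms.
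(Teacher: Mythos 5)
Your proof is correct and is essentially the paper's own argument: since $a$ has order at most two, your map $\psi(x)=x-b-a$ coincides with the paper's $h(x)=x-b+a$, and the verifying computation is the same cancellation of $2a=e$. The only (harmless) difference is that you motivate the map as translation by the retract's neutral element $\bar{b}$, which the paper leaves implicit.
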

\begin{proof}
The bijection $h\colon A\to A$; $x\mapsto x-b+a$ is an isomorphism of groups $(A,\ast)=ret_b(A,[\, ])$ and $(A,+)$. Indeed, for $x,y\in A$
\[
h(x*y)=h([xby])=h(x-b+y+a)=x-b+y+a-b+a=(x-b+a)+(y-b+a)=h(x)+h(y),
\]
which completes the proof.
\end{proof}

\begin{de} A ternary group $(A,[\, ])$ is $g$-\emph{derived from} a binary group $(A,*)$ if for every $x,y,z\in A$, $[xyz]=[xyz]_g:=x*y*z*g$, for some $g\in A$. If $g$ is a neutral element in $(A,*)$, then we simply say that $(A,[\, ])$ is \emph{derived from} (or \emph{reducible to}) $(A,*)$.
\end{de}
Let $(A,[\,])=\T((A,+),a)$ be a commutative knot-theoretic ternary group. Then in the associated abelian group $(A,+)$,
\[-x=[e\bar{x}e]=[ex\bar{e}]=[xe\bar{e}]=x,
\]
for every $x\in A$.
This immediately implies
\begin{corollary}
Let $(A,[\,])=\T((A,+),a)$ be a commutative knot-theoretic ternary group. The associated group $(A,+)$
is an elementary 2-group (Boolean group). Moreover, $(A,[\, ])$ is $a$-derived from $(A,+)$.
\end{corollary}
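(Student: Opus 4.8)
The plan is to reduce both assertions to the single identity $-x = x$, valid for all $x \in A$, which is exactly the content of the display immediately preceding the statement. Once that identity is available, the first claim is purely definitional: in $(A,+)$ every element coincides with its own inverse, so $x + x = e$ for all $x$, and this is precisely the condition that $(A,+)$ be an elementary abelian $2$-group (a Boolean group).

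For the second claim I would appeal to the normal form given in Corollary \ref{cor:count}, namely $[xyz] = x - y + z + a$. Substituting $-y = y$ turns this into $[xyz] = x + y + z + a$, which is exactly the defining relation $[xyz] = x * y * z * a$ of a $g$-derived ternary group with $g = a$ and $* = +$. Hence $(A,[\,])$ is $a$-derived from $(A,+)$, and no computation beyond this substitution is required.

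The substantive work is thus the derivation of $-x = x$, which I would carry out exactly as in the preceding display. One starts from $-x = [e\bar{x}e]$, the formula for the inverse in $ret_a(A,[\,])$ recorded in Theorem \ref{cor:inv}. Since a knot-theoretic ternary group satisfies \eqref{eq:23} by Corollary \ref{cor:knot}, Lemma \ref{lm:1} makes \eqref{eq:4} available, which lets one shift the skew element from the middle to the last slot: $[e\bar{x}e] = [ex\bar{e}]$. Commutativity is then used to interchange the first two arguments, giving $[ex\bar{e}] = [xe\bar{e}]$, and \eqref{sk:2} reduces the right-hand side to $x$. Comparing the two ends yields $-x = x$.

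The one place where commutativity genuinely enters is the interchange $[ex\bar{e}] = [xe\bar{e}]$: the left side equals $-x$ and the right side equals $x$ in the normal form, so the swap is exactly what forces these to coincide. This is the main (and only) obstacle; everything else is bookkeeping with the skew-element identities. A self-contained alternative avoids the chain altogether by reading the collapse off \eqref{eq:izo}: since $[xy\bar{x}] = 2x - y$, the commutativity criterion \cite[Corollary 9]{Dud80} forces $[ab\bar{a}] = b$, whence $2a - b = b$ for all $a,b$; taking $a = e$ gives $2b = e$ for every $b$, i.e. $(A,+)$ has exponent $2$.
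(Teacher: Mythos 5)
Your proposal is correct and follows essentially the same route as the paper: the paper's entire argument is the display $-x=[e\bar{x}e]=[ex\bar{e}]=[xe\bar{e}]=x$ immediately preceding the corollary, which is exactly the chain you reproduce (justified by Theorem \ref{cor:inv}, Lemma \ref{lm:1}, commutativity, and \eqref{sk:2}), after which both claims are read off from the normal form $[xyz]=x-y+z+a$. Your alternative derivation of exponent $2$ via \eqref{eq:izo} and \cite[Corollary 9]{Dud80} is also valid but buys nothing beyond the same conclusion.
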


Recall  that every elementary 2-group is abelian and is a direct sum of cyclic groups of order 2. Hence, in finite case, every such group is isomorphic to the group $\Z_2^k$, for some natural number $k$.
\begin{corollary}\label{cor:fin}
Each finite commutative knot-theoretic ternary group is $a$-derived from the group $\Z_2^k$, for some natural number $k$ and some $a\in\Z_2^k$. In particular, each finite commutative knot-theoretic ternary group has $2^k$ elements for some natural number $k$.
\end{corollary}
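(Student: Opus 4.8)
The plan is to read off the result directly from the corollary immediately preceding this statement, together with the classification of finite elementary abelian 2-groups. By that corollary, if $(A,[\,])=\T((A,+),a)$ is a commutative knot-theoretic ternary group, then the associated group $(A,+)$ is an elementary 2-group and $(A,[\,])$ is $a$-derived from it; concretely, since $-y=y$ in an elementary 2-group, the defining formula $[xyz]=x-y+z+a$ from Corollary \ref{cor:count} becomes $[xyz]=x+y+z+a$ for all $x,y,z\in A$. So the only two ingredients still needed are a normal form for $(A,+)$ in the finite case and a check that such a normal form transports the whole ternary structure.

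First I would invoke the fundamental theorem of finite abelian groups: a finite elementary abelian 2-group is a direct sum of cyclic groups of order $2$, hence isomorphic to $\Z_2^k$ for some natural number $k$. Fix such a group isomorphism $\psi\colon (A,+)\to\Z_2^k$ and set $a'=\psi(a)\in\Z_2^k$. Then I would verify that $\psi$ is simultaneously an isomorphism of the ternary groups $\T((A,+),a)$ and $\T(\Z_2^k,a')$. This is the only computation in the argument, and it is immediate from the $a$-derived form: for all $x,y,z\in A$,
\[
\psi([xyz])=\psi(x+y+z+a)=\psi(x)+\psi(y)+\psi(z)+a',
\]
which is exactly the $a'$-derived ternary product of $\psi(x),\psi(y),\psi(z)$ in $\Z_2^k$. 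Thus $(A,[\,])$ is isomorphic to a ternary group $a'$-derived from $\Z_2^k$, which is the first assertion; the cardinality claim follows at once, since $|A|=|\Z_2^k|=2^k$.

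I do not expect a genuine obstacle here, because the preceding corollary already supplies both the elementary-2-group property and the $a$-derived presentation, reducing everything to transporting the single parameter $a$ along $\psi$. The one point worth recording is that $\T(\Z_2^k,a')$ must itself be a legitimate knot-theoretic ternary group in the sense fixed in the notation, i.e.\ that $a'$ has order one or two in $\Z_2^k$; but in an elementary 2-group every nonzero element already has order two and the identity has order one, so this requirement is automatic and no separate verification of the order of $a'$ is needed.
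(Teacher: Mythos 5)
Your proposal is correct and follows essentially the same route as the paper, which derives this corollary directly from the preceding one (the associated group is an elementary 2-group and the ternary operation is $a$-derived from it) together with the standard fact that a finite elementary 2-group is isomorphic to $\Z_2^k$. Your extra check that the group isomorphism transports the $a$-derived ternary structure, and that the order condition on the image of $a$ is automatic, is a harmless elaboration of what the paper leaves implicit.
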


\begin{lemma}\label{lem:Bool}
Let $(A,[\, ])$ be a ternary group derived from a group $(A,*)$. Then $(A,[\, ])$ satisfies \eqref{eq:23} if and only if $(A,*)$ is an elementary 2-group.
\end{lemma}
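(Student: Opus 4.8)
The plan is to translate condition \eqref{eq:23} into a purely group-theoretic statement about $(A,*)$ and then read off the characterization directly. Since $(A,[\,])$ is derived from $(A,*)$, the ternary operation is simply $[xyz]=x*y*z$, where I write $e$ for the neutral element of $(A,*)$ and $x^{-1}$ for the group inverse. The first step is to identify the skew element: the defining equation $[a\bar{a}a]=a$ becomes $a*\bar{a}*a=a$ in $(A,*)$, and cancelling gives $\bar{a}=a^{-1}$, i.e.\ the skew element of the ternary group coincides with the group inverse. Consequently \eqref{eq:23}, which reads $[abb]=\bar{a}$, is equivalent to the identity $a*b*b=a^{-1}$ holding for all $a,b\in A$.

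For the backward implication, I would suppose $(A,*)$ is an elementary $2$-group. Then it is abelian with $b*b=e$ and $a^{-1}=a$ for all $a,b$, so $a*b*b=a*e=a=a^{-1}=\bar{a}$, and \eqref{eq:23} holds.

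For the forward implication, I would assume the identity $a*b*b=a^{-1}$ for all $a,b\in A$. Specializing $a=e$ collapses the left-hand side to $b*b$ and the right-hand side to $e$, so $b*b=e$ for every $b\in A$; that is, $(A,*)$ has exponent (at most) $2$. It then only remains to invoke the standard fact that a group of exponent $2$ is abelian (for $x,y\in A$ one has $x*y=(x*y)^{-1}=y^{-1}*x^{-1}=y*x$), which makes $(A,*)$ an elementary $2$-group and finishes the proof.

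I do not expect any serious obstacle here: the whole argument rests on the single computation $\bar{a}=a^{-1}$ in a derived ternary group, after which \eqref{eq:23} becomes an elementary identity transparently equivalent to exponent $2$. The one point requiring care is to compute the skew element from its definition rather than from properties special to knot-theoretic groups (such as $[ccc]=\bar{c}$ from Corollary \ref{cor:knot}), since at this stage we have not assumed $(A,[\,])$ to be knot-theoretic.
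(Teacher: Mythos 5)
Your proof is correct and follows essentially the same route as the paper: identify $\bar{a}=a^{-1}$ in a derived ternary group (the paper cites \cite{DudGla} for this, you derive it directly), then get exponent $2$ by specializing one variable to $e$ in \eqref{eq:23}, and check the converse by direct computation. The only cosmetic difference is that you set $a=e$ where the paper sets $b=e$, and you spell out that exponent $2$ forces commutativity.
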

\begin{proof}
Let $(A,[\, ])$ be a ternary group derived from a group $(A,*)$. By \cite{DudGla} in a ternary group derived from a binary group $(A,*)$, the skew element of $a$ coincides with the inverse element in $(A,*)$. Let $e\in A$ be the neutral element in $(A,*)$. Let $(A,[\, ])$ satisfy \eqref{eq:23}, then for every $x\in A$
\begin{align*}
x^{-1}=\bar{x}=[xee]=x*e*e=x\;\; \Rightarrow\;\; x^2=e.
\end{align*}
If $(A,*)$ is an elementary 2-group, then
\begin{align*}
[abb]=a*b*b=a*e=a=a^{-1}=\bar{a}.
\end{align*}
\end{proof}

\begin{corollary}\label{commid}
The following conditions are equivalent:
\begin{enumerate}
\item $(A,[\, ])$ is a commutative ternary Mal'cev group;
\item $(A,[\, ])$ is a knot-theoretic ternary group derived from a binary group $(A,*)$;
\item $(A,[\, ])$ is a knot-theoretic ternary group derived from an elementary 2-group.
\end{enumerate}
\end{corollary}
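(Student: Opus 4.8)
The plan is to prove the cycle of implications $(1)\Rightarrow(3)\Rightarrow(2)\Rightarrow(1)$, leaning on the representation theory built up earlier in this section. The step $(3)\Rightarrow(2)$ is immediate, since an elementary $2$-group is in particular a binary group, so the real content lies in $(1)\Rightarrow(3)$ and $(2)\Rightarrow(1)$.

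For $(1)\Rightarrow(3)$ I would start from a commutative ternary Mal'cev group and first observe, via Corollary \ref{idMalkt}, that being Mal'cev is the same as being idempotent knot-theoretic; in particular $(A,[\,])$ is knot-theoretic, so Corollary \ref{cor:count} lets me write it as $\T((A,+),a)$ with $[xyz]=x-y+z+a$ and $\bar{x}=x+a$. The crux is then to pin down both $a$ and the structure of $(A,+)$. Idempotence yields $\bar{c}=c$ for every $c$ by Remark \ref{rm:1}, which in the representation reads $c+a=c$ and hence forces $a=e$; and commutativity, through the computation preceding Corollary \ref{cor:fin} showing $-x=x$ for all $x$, forces $(A,+)$ to be an elementary $2$-group. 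With $a=e$ and $-y=y$ the defining formula collapses to $[xyz]=x+y+z$, which exhibits $(A,[\,])$ as derived from the elementary $2$-group $(A,+)$, and this is exactly $(3)$.

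For $(2)\Rightarrow(1)$ I would run the converse computationally. Assuming $(A,[\,])$ knot-theoretic and derived from a binary group $(A,*)$, so $[xyz]=x*y*z$, Corollary \ref{cor:knot} gives \eqref{eq:23}, and Lemma \ref{lem:Bool} then forces $(A,*)$ to be an elementary $2$-group. From the formula $[xyz]=x*y*z$ the commutativity of $*$ makes the ternary expression symmetric in its three arguments, yielding the chain $[abc]=[bac]=[acb]=[cba]$, while $[abb]=a*(b*b)=a*e=a$ and $[bba]=(b*b)*a=e*a=a$ give the Mal'cev identities; together these are exactly $(1)$.

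The only genuine obstacle is concentrated in $(1)\Rightarrow(3)$, and more precisely in translating the two abstract hypotheses into concrete structural data: idempotence must be converted into the statement $a=e$, and commutativity into the elementary $2$-group structure of the associated group. Both translations are already isolated in the preceding results — idempotence via Remark \ref{rm:1}, and commutativity via the identity $-x=x$ established just before Corollary \ref{cor:fin} — so I expect the proof to amount to invoking Corollary \ref{idMalkt}, Corollary \ref{cor:count}, Remark \ref{rm:1}, and the commutative corollary in the correct order, with no new computation beyond the short verifications appearing in the step $(2)\Rightarrow(1)$.
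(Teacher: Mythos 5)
Your proposal is correct and follows exactly the route the paper intends: the corollary is stated without an explicit proof, as an immediate consequence of Corollary \ref{idMalkt}, the representation in Corollary \ref{cor:count}, the observation that commutativity forces $-x=x$ in the associated group, and Lemma \ref{lem:Bool}, which are precisely the ingredients you assemble. Your cycle $(1)\Rightarrow(3)\Rightarrow(2)\Rightarrow(1)$ fills in the omitted details faithfully, with no gaps.
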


\begin{example}
The ternary group $(\Z_2,[\, ])$ with $[xyz]=x+ y+ z\pmod 2$ is the only two-element knot-theoretic ternary group derived from a binary group. The knot-theoretic ternary groups from Example \ref{ex:Zk} for $k>2$ are not derived from any binary group.
\end{example}

Having the representation theorems for knot-theoretic ternary groups, we can move to isomorphism problems. We start with recalling a general definition of a homomorphism of ternary groups.

\begin{de}\label{def:hom}
Let $(A_1,[\, ]_1)$ and $(A_2,[\, ]_2)$ be ternary groups with unary operations denoted by $\bar{\;\;}^{1}$ and $\bar{\;\;}^{2}$, respectively. A mapping $h\colon A_1\rightarrow A_2$ is a \emph{homomorphism} of ternary groups, if $h$ \emph{preserves} both fundamental operations, i.e.
\begin{align}
h([abc]_1)&=[h(a)h(b)h(c)]_2,\label{hom:1}\\
h(\bar{a}^{1})&=\overline{h(a)}^{2}\label{hom:2}
\end{align}
for every $a,b,c\in A_1$. A \emph{bijective} homomorphism is an \emph{isomorphism} and an isomorphism from $(A_1,[\, ]_1)$ to itself is an \emph{automorphism}.
\end{de}

Note that the property \eqref{hom:2} follows by \eqref{hom:1} and the definition of a skew element. More precisely, one has
\begin{equation*}
h(a)=h([a\bar{a}^{1}a]_1)=[h(a)h(\bar{a}^{1})h(a)]_2,
\end{equation*}
so $h(\bar{a}^{1})$ must be a skew element to $h(a)$. This means that \eqref{hom:1} suffices to define a homomorphism of ternary groups.

The following result concerning isomorphisms between ternary groups was proved in \cite{DudMi}.
\begin{theorem}\cite[Corollary 7]{DudMi}\label{th:izo}
The following statements are equivalent for ternary groups $(A_1,[\, ]_1)$ and $(A_2,[\, ]_2)$:
\begin{enumerate}
\item  $(A_1,[\, ]_1)$ and $(A_2,[\, ]_2)$ are isomorphic,
\item for every $c\in A_1$ there exists a group isomorphism $h\colon ret_c(A_1,[\, ]_1)\to ret_{d}(A_2,[\, ]_2)$, such that $d=h(c)$,  $h([\bar{c}\bar{c}\bar{c}]_1)=[\bar{d}\bar{d}\bar{d}]_2$ and   $h([cx\bar{c}^1]_1)=[dh(x)\overline{d}^2]_2$ for each $x\in A_1$,
\item for some $c\in A_1$ there exists a group isomorphism $h\colon ret_c(A_1,[\, ]_1)\to ret_{d}(A_2,[\, ]_2)$, such that $d=h(c)$,  $h([\bar{c}\bar{c}\bar{c}]_1)=[\bar{d}\bar{d}\bar{d}]_2$ and $h([cx\bar{c}^1]_1)=[dh(x)\overline{d}^2]_2$ for each $x\in A_1$.
\end{enumerate}
\end{theorem}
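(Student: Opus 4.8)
The plan is to prove the cyclic implications $(1)\Rightarrow(2)\Rightarrow(3)\Rightarrow(1)$, the only substantial step being the last. For $(1)\Rightarrow(2)$, let $f\colon(A_1,[\,]_1)\to(A_2,[\,]_2)$ be an isomorphism, fix an arbitrary $c\in A_1$, and put $d=f(c)$. Writing $x*y=[xcy]_1$ and $u\star v=[udv]_2$ for the two retract multiplications, the homomorphism property \eqref{hom:1} gives $f(x*y)=[f(x)\,d\,f(y)]_2=f(x)\star f(y)$, so $f$ is itself a group isomorphism $ret_c(A_1,[\,]_1)\to ret_d(A_2,[\,]_2)$ with $d=f(c)$. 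The remaining two equalities follow from \eqref{hom:1} and \eqref{hom:2}: $f([\bar c\bar c\bar c]_1)=[\overline{f(c)}\,\overline{f(c)}\,\overline{f(c)}]_2=[\bar d\bar d\bar d]_2$ and $f([cx\bar c^1]_1)=[d\,f(x)\,\bar d^2]_2$. Hence $h:=f$ witnesses $(2)$ for every $c$, and $(2)\Rightarrow(3)$ is trivial since $A_1\neq\emptyset$.

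The heart of the matter is $(3)\Rightarrow(1)$, for which I would invoke the Hossz\'u--Gluskin representation theorem for ternary groups (cf. \cite{DudGla}). Fixing the given $c$ together with $d=h(c)$, this theorem recovers the whole ternary operation from three data attached to the base point: the retract group $ret_c(A_1,[\,]_1)$, the automorphism $\varphi_c(x)=[\bar c^1 x c]_1$ of that group, and the distinguished element $b_c=[\bar c\bar c\bar c]_1$, through
\[
[xyz]_1=x*\varphi_c(y)*\varphi_c^{2}(z)*b_c,
\]
and analogously $[uvw]_2=u\star\varphi_d(v)\star\varphi_d^{2}(w)\star b_d$ with $\varphi_d(u)=[\bar d^2 u d]_2$ and $b_d=[\bar d\bar d\bar d]_2$. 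Now the three hypotheses of $(3)$ say exactly that $h$ transports each datum for $c$ to the corresponding one for $d$. The first is that $h$ is a group isomorphism of the retracts. The second, $h([\bar c\bar c\bar c]_1)=[\bar d\bar d\bar d]_2$, is $h(b_c)=b_d$. The third, $h([cx\bar c^1]_1)=[d\,h(x)\,\bar d^2]_2$, says $h$ intertwines $\psi_c(x)=[cx\bar c^1]_1$ with $\psi_d$; but one checks $\psi_c=\varphi_c^{-1}$ (a short computation with associativity and \eqref{sk:2}), and since $h$ is a group isomorphism, $h\psi_c=\psi_d h$ is equivalent to $h\varphi_c=\varphi_d h$, hence also to $h\varphi_c^2=\varphi_d^2 h$. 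Applying $h$ to the displayed representation and using that it is a homomorphism intertwining $\varphi_c$ with $\varphi_d$ yields $h([xyz]_1)=h(x)\star\varphi_d(h(y))\star\varphi_d^{2}(h(z))\star b_d=[h(x)h(y)h(z)]_2$. Thus $h$ preserves the ternary operation and, being a bijection, is a ternary-group isomorphism (preservation of the skew map is automatic, as noted after Definition \ref{def:hom}).

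The main obstacle is aligning $(3)\Rightarrow(1)$ with the correct form of the representation: one must realise the relevant automorphism precisely as $\varphi_c(x)=[\bar c^1 x c]_1$ and the constant as $b_c=[\bar c\bar c\bar c]_1$, and then observe that the condition in $(3)$ is phrased through the \emph{inverse} automorphism $x\mapsto[cx\bar c^1]_1$, which (for a group isomorphism) encodes the same intertwining. The technical core is therefore verifying that $x\mapsto[\bar c^1 x c]_1$ is indeed an automorphism of $ret_c(A_1,[\,]_1)$ and that the displayed formula reproduces $[xyz]_1$; everything else is formal. As a consistency check, in the semi-commutative case $\varphi_c$ is involutive, so $\varphi_c^{2}=\mathrm{id}$ and the representation collapses to $[xyz]_1=x*\varphi_c(y)*z*b_c$, recovering Lemma \ref{simpdescr}.
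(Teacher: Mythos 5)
The paper does not actually prove Theorem \ref{th:izo}: it is imported verbatim from \cite[Corollary 7]{DudMi}, so there is no in-text argument to compare yours against. Judged on its own, your proof is correct and supplies what the paper omits. The easy directions $(1)\Rightarrow(2)\Rightarrow(3)$ are handled exactly as one would expect from \eqref{hom:1} and \eqref{hom:2}. For $(3)\Rightarrow(1)$ you correctly identify the real content as the Hossz\'u--Gluskin representation $[xyz]_1=x*\varphi_c(y)*\varphi_c^{2}(z)*b_c$ with $x*y=[xcy]_1$, $\varphi_c(x)=[\bar c^{1}xc]_1$ and $b_c=[\bar c\bar c\bar c]_1$; this formula does hold in any ternary group (a direct check using associativity and \eqref{sk:1}--\eqref{sk:2} confirms it, e.g.\ $x*\varphi_c(y)=[xyc]$ and then $x*\varphi_c(y)*\varphi_c^{2}(z)=[[xyz]cc]$, which composed with $*\,b_c$ collapses to $[xyz]$). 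Your two auxiliary claims also check out: $\varphi_c$ is an automorphism of $ret_c(A_1,[\,]_1)$, and $\psi_c(x)=[cx\bar c^{1}]_1$ is its inverse, since $[\bar c[cx\bar c]c]=[[\bar ccx]\bar cc]=x$; hence the intertwining hypothesis of $(3)$ is indeed equivalent to $h\varphi_c=\varphi_d h$ and therefore to $h\varphi_c^{2}=\varphi_d^{2}h$, and the final computation transporting the representation through $h$ is valid. The remark that preservation of the skew element is automatic matches the paper's observation after Definition \ref{def:hom}. Your consistency check against Lemma \ref{simpdescr} in the semi-commutative case is also accurate. In short: the paper's ``proof'' is a citation, while yours is a genuine, correct derivation along the lines one would expect the cited source to follow; the only thing I would ask you to write out in full is the verification of the representation formula at the base point $c$, which you correctly flag as the technical core.
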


Lemma \ref{lem:iz} and the conditions \eqref{eq:23} and \eqref{eq:izo} satisfied in each knot-theoretic ternary group
simplify Theorem \ref{th:izo}. 
Now it becomes:
\begin{theorem}\label{cor:izo}
Let $(A_1,[\, ]_1)=\T((A_1,+_1),a)$ and $(A_2,[\, ]_2)=\T((A_2,+_2),b)$ be two knot-theoretic ternary groups. Then the following statements are equivalent:
\begin{enumerate}
\item  $(A_1,[\, ]_1)$ and $(A_2,[\, ]_2)$ are isomorphic;
\item there exists a group isomorphism $h\colon (A_1,+_1)\to (A_2,+_2)$ such that $b=h(a)$.
\end{enumerate}
\end{theorem}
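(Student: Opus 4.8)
The plan is to prove the two implications separately; throughout I would use that both operations are $\T$-derived, $[xyz]_1=x-_1y+_1z+_1a$ and $[xyz]_2=x-_2y+_2z+_2b$, with $a,b$ of order at most two. For the easy direction (2)$\Rightarrow$(1), suppose $h\colon(A_1,+_1)\to(A_2,+_2)$ is a group isomorphism with $h(a)=b$. Then the one-line computation
\[
h([xyz]_1)=h(x-_1y+_1z+_1a)=h(x)-_2h(y)+_2h(z)+_2b=[h(x)h(y)h(z)]_2
\]
shows that $h$ preserves $[\,]$; being bijective it is a ternary isomorphism, since by the remark after Definition \ref{def:hom} preservation of $[\,]$ already suffices. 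Equivalently, one could route this through Theorem \ref{th:izo}(3)$\Rightarrow$(1) with $c=a$, which forces $d=h(a)=b$, and here the two auxiliary conditions become automatic: $h([\bar a\bar a\bar a]_1)=[\bar b\bar b\bar b]_2$ collapses to $h(a)=b$ because $[\bar c\bar c\bar c]=\bar{\bar c}=c$ by Corollary \ref{cor:knot} and \eqref{sk:4}, while $h([ax\bar a]_1)=[bh(x)\bar b]_2$ collapses to $h(-_1x)=-_2h(x)$ because $[cx\bar c]=2c-x$ by \eqref{eq:izo} together with $2a=2b=0$. This is exactly the sense in which \eqref{eq:23} and \eqref{eq:izo} make the extra conditions of Theorem \ref{th:izo} vacuous for knot-theoretic ternary groups.

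For the converse (1)$\Rightarrow$(2), I would feed an abstract ternary isomorphism into Theorem \ref{th:izo}. Applying (1)$\Rightarrow$(2) at the base point $c=a$ yields a group isomorphism $h\colon ret_a(A_1,[\,]_1)\to ret_d(A_2,[\,]_2)$ with $d=h(a)$. The source is literally the associated group, $ret_a(A_1,[\,]_1)=(A_1,+_1)$, by the Notation following Corollary \ref{cor:count}. The target is the retract at $d$, which in general differs from $(A_2,+_2)=ret_b(A_2,[\,]_2)$, so I would compose with the isomorphism supplied by Lemma \ref{lem:iz}, namely $g\colon ret_d(A_2,[\,]_2)\to(A_2,+_2)$, $x\mapsto x-_2d+_2b$. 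Then $g\circ h\colon(A_1,+_1)\to(A_2,+_2)$ is a group isomorphism and
\[
(g\circ h)(a)=g(d)=d-_2d+_2b=b,
\]
which is precisely condition (2).

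The main obstacle is conceptual rather than computational: Theorem \ref{th:izo} only hands us an isomorphism onto $ret_d$, not onto $ret_b$, and one must resist silently assuming $d=b$. Lemma \ref{lem:iz} is exactly the tool that identifies every retract canonically with the associated group, and the explicit translation $x\mapsto x-_2d+_2b$ is what transports the normalisation $d=h(a)$ into the desired $h(a)=b$. Once this identification is in place and the two auxiliary conditions are discarded on the grounds noted above, the criterion of Theorem \ref{th:izo} reduces to the single requirement of an additive isomorphism of associated groups matching $a$ with $b$.
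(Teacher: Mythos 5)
Your proof is correct and follows essentially the route the paper intends: the paper gives no written-out proof, merely asserting that Theorem \ref{th:izo} ``simplifies'' via Lemma \ref{lem:iz} and the identities \eqref{eq:23} and \eqref{eq:izo}, and your argument supplies exactly those missing details (the retract $ret_d$ being transported to $(A_2,+_2)$ by Lemma \ref{lem:iz}, and the two auxiliary conditions of Theorem \ref{th:izo} becoming vacuous because $[\bar c\bar c\bar c]=c$ and $[cx\bar c]=2c-x$ with $2a=2b=0$). The direct computation you give for (2)$\Rightarrow$(1) is a harmless and slightly more elementary alternative to invoking Theorem \ref{th:izo} for that direction.
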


In particular, using Corollary \ref{cor:fin}, we can characterize all non-isomorphic finite commutative knot-theoretic ternary groups.
\begin{corollary}\label{cor:izocom}
Let $k$ be a natural number and $a,b\in \Z_2^k$. Two commutative knot-theoretic ternary groups $(\Z_2^k,[\,]_a)$ and $(\Z_2^k,[\,]_b)$
are isomorphic if and only if there is a group automorphism $h\colon \Z_2^k\to \Z_2^k$ such that $b=h(a)$.
\end{corollary}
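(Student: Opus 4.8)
The plan is to derive Corollary \ref{cor:izocom} as a direct specialization of Theorem \ref{cor:izo} to the commutative case, where by Corollary \ref{cor:fin} every finite commutative knot-theoretic ternary group is $a$-derived from $\Z_2^k$. First I would set up notation: write $(A_1,[\,]_1)=\T((\Z_2^k,+),a)$ and $(A_2,[\,]_2)=\T((\Z_2^k,+),b)$, where $[\,]_a$ denotes $[xyz]=x+y+z+a$ (note that in $\Z_2^k$ we have $-y=y$, so the subtraction in the general representation collapses to addition). The key observation is that the associated group, in the sense of Theorem \ref{cor:izo}, is in both cases exactly $(\Z_2^k,+)$ with its distinguished element ($a$ and $b$ respectively).

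The main step is simply to invoke Theorem \ref{cor:izo}, which asserts that $\T((A_1,+_1),a)\cong\T((A_2,+_2),b)$ if and only if there is a group isomorphism $h\colon(A_1,+_1)\to(A_2,+_2)$ with $b=h(a)$. In our situation both associated groups are literally $(\Z_2^k,+)$, so a group isomorphism $h\colon\Z_2^k\to\Z_2^k$ is precisely a group automorphism of $\Z_2^k$. Thus the condition $b=h(a)$ for some group isomorphism becomes exactly the condition $b=h(a)$ for some group automorphism $h\colon\Z_2^k\to\Z_2^k$, which is the statement to be proved.

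The only point requiring a word of care is that Corollary \ref{cor:fin} guarantees the representation over the \emph{same} group $\Z_2^k$ for both ternary groups, so that the isomorphism in Theorem \ref{cor:izo} genuinely goes from $\Z_2^k$ to itself and is therefore an automorphism; this is why the general ``isomorphism'' in Theorem \ref{cor:izo} specializes to an ``automorphism'' here. One should also confirm that the hypothesis of Corollary \ref{cor:fin} applies, namely that any element $a\in\Z_2^k$ has order one or two, which is automatic in an elementary $2$-group, so the ternary groups $(\Z_2^k,[\,]_a)$ and $(\Z_2^k,[\,]_b)$ are indeed legitimate knot-theoretic ternary groups of the form $\T((\Z_2^k,+),a)$ and $\T((\Z_2^k,+),b)$.

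I do not anticipate a serious obstacle: the content is entirely contained in Theorem \ref{cor:izo}, and the work is the bookkeeping of recognizing that ``group isomorphism between the two associated groups'' reduces to ``automorphism of $\Z_2^k$'' once both associated groups are identified with $\Z_2^k$. The mild subtlety, if any, is making explicit that the commutativity hypothesis (via Corollary \ref{cor:fin}) is what forces both associated groups to be the common group $\Z_2^k$ rather than two a priori distinct abelian groups; once that is fixed, the equivalence is immediate.
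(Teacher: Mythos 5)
Your proposal is correct and follows essentially the same route as the paper, which presents this corollary as an immediate consequence of Theorem \ref{cor:izo} specialized via Corollary \ref{cor:fin} to the case where both associated groups are $\Z_2^k$, so that the group isomorphism becomes an automorphism. The bookkeeping you spell out (both associated groups coinciding, $-y=y$ collapsing the operation to $x+y+z+a$) matches the paper's intended, unwritten argument.
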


\subsection*{Enumerating knot-theoretic ternary groups}
We can apply the above results to enumerate isomorphism classes of knot-theoretic ternary groups. By Theorem \ref{cor:izo}, knot-theoretic ternary groups associated with non-isomorphic abelian groups are non-isomorphic. Moreover, if there is a group automorphism $h\colon (A,+)\to (A,+)$, then two knot-theoretic ternary groups $(A,[\,])=\T((A,+),a)$ and $(A,<\,>)=\T((A,+),h(a))$
are isomorphic.

\begin{example}
For $(A,+)=(\Z_2\times \Z_4,+)$, there are three non-isomorphic knot-theoretic ternary groups: $\T((A,+),(0,0))$, $\T((A,+),(0,2))$ and $\T((A,+),(1,0))$ (isomorphic to $\T((A,+),(1,2))$.
\end{example}

For each abelian group there is exactly one idempotent knot-theoretic ternary group. 
By Corollary \ref{cor:count}, these are the only knot-theoretic ternary groups with odd number of elements.

Further, it is known that for each $k\in \mathbb{N}$ and two non-neutral elements $a\neq b\in \Z_2^k$ there is a group automorphism
$h\colon \Z_2^k\to \Z_2^k$ such that $b=h(a)$. Then by Corollaries \ref{cor:fin} and \ref{cor:izocom}, for each $k\in \mathbb{N}$
there are exactly two non-isomorphic commutative knot-theoretic ternary groups: idempotent $(\Z_2^k,[\,]_e)$ and non-idempotent $(\Z_2^k,[\,]_a)$, where $e$ is the neutral element in the group $(\Z_2^k,+)$ and $a$ is an arbitrary element in $\Z_2^k-\{e\}$. In Table 1 we listed the numbers of knot-theoretic ternary groups of size $\leq 64$ up to isomorphism.

\begin{table}
$$\begin{array}{|r|cccccccccccccccccc|}\hline
n & 1&2&3&4&5&6&7&8&9&10&11&12&13&14&15&16&17&18\\\hline
\text{all}& 1& 2& 1& 4& 1& 2& 1& 7& 2& 2&1&4&1&2&1&12&1&4\\
\text{idempotent}&1 & 1& 1& 2& 1& 1& 1& 3& 2& 1&1&2&1&1&1&5&1&2\\
\text{commutative}&1 & 2& 0& 2& 0& 0& 0& 2& 0& 0& 0&0&0&0&0&2&0&0\\\hline
\end{array}$$

$$\begin{array}{|r|ccccccccccccccc|}\hline
n & 19&20&21&22&23&24&25&26&27&28&29&30&31&32&33\\\hline
\text{all}&1 &4&1&2&1& 7& 2& 2& 3& 4& 1& 2&1 & 19&1\\
\text{idempotent}&1&2&1&1&1& 3& 2& 1& 3& 2& 1& 1& 1& 7& 1\\
\text{commutative}&0 &0&0&0&0& 0& 0& 0& 0& 0& 0& 0& 0& 2& 0\\\hline
\end{array}$$

$$\begin{array}{|r|ccccccccccccccc|}\hline
n & 34&35&36&37&38&39&40&41&42&43&44&45&46&47&48\\\hline
\text{all}&2&1 &  10& 1&2 & 1& 7& 1&2 & 1&4&2&2&1&10\\
\text{idempotent}&1&1 & 5& 1& 1&1 & 3& 1& 1&1 & 2&2&1&1&4\\
\text{commutative}&0& 0& 0& 0& 0& 0& 0& 0& 0& 0& 0& 0&0&0&0\\\hline
\end{array}$$

$$\begin{array}{|r|cccccccccccccccc|}\hline
n & 49&50&51&52&53&54&55&56&57&58&59&60&61&62&63&64\\\hline
\text{all}&2&4 &  1& 4&1 & 6& 1& 7& 1& 2&1&4&1&2&2&30\\
\text{idempotent}&2&2 & 1& 2& 1&3 & 1&3 & 1& 1&1 &2&1&1&2&11\\
\text{commutative}&0& 0& 0& 0& 0& 0& 0& 0& 0& 0& 0& 0&0&0&0&2\\\hline
\end{array}$$
\caption{The number of knot-theoretic ternary groups of size $n$, up to isomorphism.}
\label{Fig:count_all}
\end{table}

\section{Further characterization of knot-theoretic ternary groups}\label{sec:3}
Now we present some characterizations of knot-theoretic ternary groups using sets of properties that do not include associativity and unique-solvability.
\begin{lemma}\cite[Corollary 4, for $n=3$]{DudGla}\label{lm:cor4}
A ternary semigroup $(A,[\, ])$ is a ternary group if and only if for all $a,b\in A$ there are $x,y\in A$ such that
\begin{align}\label{eq:cor4}
[xaa]=[aay]=b.
\end{align}
\end{lemma}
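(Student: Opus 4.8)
The plan is to treat the two implications separately: the forward one is immediate, and the converse reduces to showing that each binary retract of $(A,[\,])$ is a group. For the direction $(\Rightarrow)$, assume $(A,[\,])$ is a ternary group, hence a ternary quasigroup; then for fixed $a$ the equations $[zaa]=b$ and $[aaz]=b$ are solvable in $z$, and these solutions are the required $x$ and $y$. So I only have to prove $(\Leftarrow)$.

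For $(\Leftarrow)$ assume $(A,[\,])$ is a ternary semigroup in which, for all $a,b$, the maps $R_a\colon x\mapsto[xaa]$ and $L_a\colon y\mapsto[aay]$ are surjective. My goal is to show that every retract $ret_a(A,[\,])=(A,\ast_a)$, with $x\ast_a y:=[xay]$, is a group. Granting this for all base points, the general right equation $[pbz]=c$ reads $p\ast_b z=c$ and is uniquely solvable because left translations in the group $(A,\ast_b)$ are bijective; likewise $[zbq]=c$ reads $z\ast_b q=c$ and is uniquely solvable; and solvability of the two extreme equations for \emph{arbitrary} parameters forces $(A,[\,])$ to be a ternary group by the classical solvability criterion for $n$-ary semigroups (see \cite{Post}), which also delivers the middle equation and uniqueness. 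Thus everything is reduced to the group property of the retracts.

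First I build a two-sided identity for $\ast_a$. Associativity of $[\,]$ gives associativity of $\ast_a$, since $[[xay]az]=[xa[yaz]]$. Choose $u$ with $[uaa]=a$ (surjectivity of $R_a$). For every $w$ pick $y$ with $[aay]=w$; then, using $[uaa]=a$ and associativity,
\[ w=[aay]=[[uaa]ay]=[ua[aay]]=[uaw], \]
so $u\ast_a w=w$ for all $w$. Symmetrically a $u'$ with $[aau']=a$ satisfies $w\ast_a u'=w$, and then $u=u\ast_a u'=u'$; write $e_a:=u=u'$ for this two-sided identity. Surjectivity of $R_a$ and $L_a$ now yields $r,s$ with $r\ast_a a=[raa]=e_a$ and $a\ast_a s=[aas]=e_a$, whence $r=s=:a^{-1}$ by the monoid argument, so $a$ is invertible in $(A,\ast_a)$ and both translations by $a$ are bijective. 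Since the hypothesis holds for every base point, each $L_c$ and $R_c$ is in fact a bijection.

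The remaining step is to promote invertibility of the single element $a$ to invertibility of \emph{every} element of $(A,\ast_a)$, and this is the main obstacle; it is precisely where the hypothesis must be exploited at many base points at once, not only at $a$. One useful half-step: for arbitrary $g$, writing $w=[aaw'']$ (uniquely, as $L_a$ is bijective) and using associativity,
\[ [ggw]=[gg[aaw'']]=[[gga]aw'']=[gga]\ast_a w'', \]
and since $w\mapsto[ggw]=L_g(w)$ and $w\mapsto w''$ are both bijections, left multiplication by $[gga]$ is bijective in $(A,\ast_a)$; symmetrically, right multiplication by $[agg]$ is bijective. Hence the left-invertible elements form a submonoid containing $a^{\pm1}$ and all $[gga]$. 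The crux is to show this submonoid exhausts $A$, i.e.\ that $(A,\ast_a)$ is a group. I expect to close this either by a generation argument from the elements $[gga]$ and $[agg]$ combined with the identities linking the different retracts $(A,\ast_c)$, or — most cleanly — by invoking directly the classical characterization of $n$-ary groups among $n$-ary semigroups via solvability of the extreme-position equations, of which the present lemma is exactly the specialization to equal parameters.
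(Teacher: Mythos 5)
The paper itself gives no proof of this lemma: it is imported verbatim from Dudek--G\l{}azek \cite{DudGla}, so there is no internal argument to compare against and your attempt must stand on its own. The forward direction, the reduction of the converse to ``every retract $ret_b(A,[\,])$ is a group,'' the construction of the two-sided identity $e_a$ of $(A,\ast_a)$, and the invertibility of the base point $a$ are all correct. But the proof is not complete: you explicitly leave open what you call the crux, namely that \emph{every} element of the monoid $(A,\ast_a)$ is invertible, and neither of your proposed escape routes works. The ``generation argument'' is only a hope --- there is no reason the elements $[gga]$ and $[agg]$ should exhaust or generate all of $A$. And the ``classical characterization of $n$-ary groups among $n$-ary semigroups via solvability of the extreme-position equations'' requires solvability of $[xa_1a_2]=b$ and $[a_1a_2y]=b$ for \emph{arbitrary} $a_1,a_2$, whereas the hypothesis gives it only for $a_1=a_2$; upgrading from equal to arbitrary parameters is precisely the content of the lemma being proved, so invoking that theorem at this point is circular.

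The gap is genuine but short to close: apply the hypothesis at base point $c$ with target $e_a$, rather than at base point $a$. Given $c\in A$, choose $x,y$ with $[xcc]=e_a$ and $[ccy]=e_a$. Since $e_a$ is the identity of $(A,\ast_a)$ one has $c=[e_aac]=[cae_a]$, so by associativity
\[
e_a=[xcc]=[xc[e_aac]]=[[xce_a]ac]=[xce_a]\ast_a c,
\qquad
e_a=[ccy]=[[cae_a]cy]=[ca[e_acy]]=c\ast_a[e_acy],
\]
so $c$ has both a left inverse and a right inverse in the monoid $(A,\ast_a)$ and is therefore invertible. Hence every retract is a group, and your own reduction (unique solvability of $[pbz]=c$ and $[zbq]=c$ for all parameters, then the classical criterion from \cite{Post} for the middle equation and uniqueness) finishes the argument. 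With this step inserted, your proposal becomes a correct, self-contained proof of the cited lemma.
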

\begin{lemma}\label{lm:12assemi}
Let $(A,[\, ])$ be a semi-commutative, $(1,2)$-associative ternary groupoid. Let a unary operation $\bar{\ }\colon x\mapsto \bar{x}$ exist on $A$, such that for all $a,b \in A$, the conditions \eqref{eq:23} and
\begin{align}\label{a3a}
[\bar{a}ab]=b
\end{align}
are satisfied.
Then $(A,[\, ])$ is a knot-theoretic ternary group.
\end{lemma}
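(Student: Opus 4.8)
The plan is to reduce everything to two earlier results: Remark~\ref{rm:3}, which upgrades the groupoid to a ternary semigroup, and Corollary~\ref{cor:knot}, which says that a \emph{ternary group} is knot-theoretic precisely when it is semi-commutative and satisfies \eqref{eq:23}. Since semi-commutativity and \eqref{eq:23} are already among the hypotheses, the entire problem is to promote $(A,[\,])$ to a bona fide ternary group and to check that the externally given $\bar{\ }$ coincides with the intrinsic skew operation.

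First I would invoke Remark~\ref{rm:3}: a semi-commutative, $(1,2)$-associative ternary groupoid is associative, so $(A,[\,])$ is a ternary semigroup. The next and most important step is to show that the unary map $\bar{\ }$ is involutive, hence bijective. Applying \eqref{eq:23} with $x=\bar a$, $y=a$ gives $[\bar a a a]=\bar{\bar a}$, while \eqref{a3a} with $b=a$ gives $[\bar a a a]=a$; comparing these, $\bar{\bar a}=a$.

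With $\bar{\ }$ bijective, I would verify the solvability criterion of Lemma~\ref{lm:cor4}, namely that for all $a,b$ there exist $x,y$ with $[xaa]=[aay]=b$. Taking $x=\bar b$, equation \eqref{eq:23} and involutivity give $[\bar b a a]=\bar{\bar b}=b$; taking $y=\bar b$ and using semi-commutativity, $[aa\bar b]=[\bar b a a]=b$. Thus $(A,[\,])$ is a ternary group. Finally, I would reconcile $\bar{\ }$ with the skew operation: in any ternary group, \eqref{sk:2} yields $[a^{\ast} a b]=b$ for the skew element $a^{\ast}$, while \eqref{a3a} gives $[\bar a a b]=b$, so left-cancellativity forces $\bar a=a^{\ast}$. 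Hence \eqref{eq:23} holds for the genuine skew element, and Corollary~\ref{cor:knot} applies to conclude that $(A,[\,])$ is knot-theoretic.

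The main obstacle I anticipate is conceptual rather than computational: one must resist proving \eqref{A3L} and \eqref{A3R} by hand and instead recognize that once associativity (Remark~\ref{rm:3}) and solvability (Lemma~\ref{lm:cor4}) are in place, Corollary~\ref{cor:knot} delivers the result for free. The only genuine point requiring care is the identification of the given $\bar{\ }$ with the intrinsic skew element, since Corollary~\ref{cor:knot} is phrased in terms of the latter; the involutivity computation together with the cancellation argument are precisely what close this gap.
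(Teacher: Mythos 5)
Your proof is correct and follows essentially the same route as the paper's: Remark~\ref{rm:3} gives associativity, the computation $\bar{\bar a}=[\bar a a a]=a$ gives involutivity, Lemma~\ref{lm:cor4} with $x=y=\bar b$ gives the ternary group structure, and Corollary~\ref{cor:knot} finishes. Your additional step identifying the externally given $\bar{\ }$ with the intrinsic skew element via left-cancellativity is a point of care the paper leaves implicit, and it is a worthwhile addition since Corollary~\ref{cor:knot} is stated in terms of the genuine skew operation.
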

\begin{proof}
By Remark \ref{rm:3}, $(A,[\, ])$ is a ternary semigroup. By \eqref{eq:23} and \eqref{a3a},
\[
\bar{\bar{a}}=[\bar{a}aa]=a.
\]
Thus, by semi-commutativity and Lemma \ref{lm:cor4} (substituting $x=y=\bar{b}$ in \eqref{eq:cor4}), $(A,[\, ])$ is a ternary semigroup. By Corollary \ref{cor:knot} it is knot-theoretic.
\end{proof}

\begin{theorem} \label{exchangesthm}
A ternary groupoid $(A,[\, ])$ is a knot-theoretic ternary group if and only if it satisfies, for all $x,a,b,c,d,e\in A$, the following conditions:
\begin{align}\label{a1}
[[abc]de]=[[adc]be],
\end{align}
\begin{align}\label{a2}
[a[bcd]e]=[a[bed]c],
\end{align}
and there exists on $A$ a unary operation $\bar{\ }\colon x\mapsto\bar{x}$ such that \eqref{eq:23} and
\begin{align}\label{a3}
[\bar{a}ax]=[xa\bar{a}]=x,
\end{align}
are satisfied.
\end{theorem}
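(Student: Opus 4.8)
The equivalence splits into a routine forward direction and a substantive backward direction; the plan for the converse is to funnel everything into Lemma \ref{lm:12assemi}, so that the only things I really have to extract from the two exchange laws \eqref{a1} and \eqref{a2} are \emph{semi-commutativity} and \emph{$(1,2)$-associativity}.

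\textbf{Forward direction.} Assuming $(A,[\,])$ is a knot-theoretic ternary group, the quickest route is the representation of Corollary \ref{cor:count}: write $[xyz]=x-y+z+\alpha$ in an abelian group $(A,+)$ with $2\alpha=0$ and $\bar x=x+\alpha$. Each of the four conditions then collapses to an identity in $(A,+)$. Using $2\alpha=0$ one gets $[[abc]de]=a-b+c-d+e=[[adc]be]$ and $[a[bcd]e]=a-b+c-d+e=[a[bed]c]$, so \eqref{a1} and \eqref{a2} are nothing but commutativity of $+$; likewise $[abb]=a+\alpha=\bar a$ is \eqref{eq:23}, and $[\bar aax]=[xa\bar a]=x+2\alpha=x$ is \eqref{a3}. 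I expect no difficulty here.

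\textbf{Backward direction, preliminaries.} Now assume \eqref{a1}, \eqref{a2}, \eqref{eq:23}, \eqref{a3}; note that \eqref{a3} contains \eqref{a3a}. First I would harvest the unary consequences: from \eqref{a3} and \eqref{eq:23}, $\bar{\bar a}=[\bar aaa]=a$ and the four cancellations $[\bar aax]=[a\bar ax]=[xa\bar a]=[x\bar aa]=x$. Then I exploit \eqref{a1} via the substitution $a=[\bar yya]$: rewriting and applying \eqref{a1} gives $[abc]=[[\bar yba]yc]$ for all $y$. Taking $y=c$ and using \eqref{eq:23} yields $[abc]=\overline{[\bar cba]}$, while taking $y=\bar b$ yields both the companion square law $[yyx]=\bar x$ and the two-bar relation $[abc]=[\bar a\bar bc]$. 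A short specialisation of \eqref{a1} (set the two outer slots equal to $\bar b$) gives the single-bar law $\overline{[abc]}=[a\bar bc]$; the mirror specialisation of \eqref{a2} (collapse the inner triple to a square, then use $[be\bar e]=b$) gives $\overline{[abc]}=[ab\bar c]$; and combining either with $[abc]=[\bar a\bar bc]$ gives the third form $\overline{[abc]}=[\bar abc]$.

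\textbf{Backward direction, the core.} Semi-commutativity then drops out immediately: $[abc]=\overline{[\bar cba]}=[cba]$, the second equality being the single-bar law $\overline{[xyz]}=[\bar xyz]$. The \emph{main obstacle} is $(1,2)$-associativity, precisely because the two exchange laws keep the inner bracket trapped in its slot and cannot on their own transport it from the first position to the second. The trick I would use is to manufacture a second bracket by the insertion $a=[\bar dda]$ and let \eqref{a1} merge the two, rewriting $[a[bcd]e]=[[\bar d[bcd]a]de]$; then it suffices to show the block collapses, $[\bar d[bcd]a]=[abc]$. For this I flip it by semi-commutativity to $[a[bcd]\bar d]$, pass to $\overline{[a[bcd]d]}$ by the single-bar law, simplify $[a[bcd]d]=[a[bdd]c]=[a\bar bc]=\overline{[abc]}$ using \eqref{a2} and \eqref{eq:23}, and invoke the involution. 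Substituting back gives $[a[bcd]e]=[[abc]de]$, i.e. $(1,2)$-associativity. Finally I would apply Lemma \ref{lm:12assemi}: the groupoid is semi-commutative and $(1,2)$-associative, and the unary operation satisfies \eqref{eq:23} and \eqref{a3a}, so $(A,[\,])$ is a knot-theoretic ternary group. The only genuinely clever point is the insertion $a=[\bar dda]$ that unlocks associativity; the rest is bookkeeping with the bar-relations coming from the exchange laws.
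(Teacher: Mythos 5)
Your proposal is correct, and its skeleton coincides with the paper's: the forward direction is routine, and the converse is funneled into Lemma \ref{lm:12assemi} by extracting semi-commutativity and $(1,2)$-associativity from \eqref{a1} and \eqref{a2}. The tactical execution differs, though. For the forward direction the paper simply invokes Corollary \ref{cor:knot} and notes that \eqref{a1}, \eqref{a2} follow from associativity plus semi-commutativity, whereas you pass through the representation $[xyz]=x-y+z+\alpha$ of Corollary \ref{cor:count}; both work, yours being more concrete but leaning on the heavier machinery of Section \ref{S:isom}. For the converse, you build an arsenal of bar-identities ($[bba]=\bar a$, $[abc]=[\bar a\bar bc]$, $\overline{[abc]}=[\bar abc]=[a\bar bc]=[ab\bar c]$) and then deduce semi-commutativity as $[abc]=\overline{[\bar cba]}=[cba]$ and $(1,2)$-associativity via the insertion $a=[\bar dda]$; I checked these chains and they go through. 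The paper reaches the same two facts much more directly: semi-commutativity from the single chain $[abc]=[[c\bar ca]bc]=[[cba]\bar cc]=[cba]$ (insertion $a=[c\bar ca]$ plus one application of \eqref{a1}), and $(1,2)$-associativity from $[a[bcd]e]=[[ab\bar b][bcd]e]=[[a[bcd]\bar b]be]=[[a[b\bar bd]c]be]=[[adc]be]=[[abc]de]$ (insertion $a=[ab\bar b]$, then \eqref{a1}, \eqref{a2}, \eqref{a1} again). So your "insertion" trick is exactly the paper's device, only applied after a longer detour; the intermediate bar-laws you derive are not needed for the shortest route, though they are correct and could be of independent use.
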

\begin{proof}
Suppose that $(A,[\, ])$ is a knot-theoretic ternary group. Then by Corollary \ref{cor:knot} it is semi-commutative and satisfies the conditions \eqref{a3} and \eqref{eq:23}. From associativity and semi-commu\-ta\-ti\-vi\-ty follow the exchange conditions \eqref{a1} and \eqref{a2}.

Now let $(A,[\, ])$ satisfy \eqref{a1}, \eqref{a2}, \eqref{a3} and \eqref{eq:23}.
By \eqref{eq:23} and \eqref{a3}, as in the proof of Lemma \ref{lm:12assemi}, $\bar{\bar{a}}=a$.
Thus, from \eqref{a3},
\[
[a\bar{a}x]=[\bar{\bar{a}}\bar{a}x]=x,
\]
and
\[
[x\bar{a}a]=[x\bar{a}\bar{\bar{a}}]=x.
\]
From \eqref{a1} follows semi-commutativity:
\[
[abc]=[[c\bar{c}a]bc]=[[cba]\bar{c}c]=[cba].
\]
Now, from \eqref{a1} and \eqref{a2}, we obtain $(1,2)$-associativity:
\[
[a[bcd]e]=[[ab\bar{b}][bcd]e]=[[a[bcd]\bar{b}]be]=[[a[b\bar{b}d]c]be]=
[[adc]be]=[[abc]de].
\]
By Lemma, \ref{lm:12assemi} $(A,[\, ])$ is a knot-theoretic ternary group.
\end{proof}

\begin{theorem}
A ternary groupoid $(A,[\, ])$ is a knot-theoretic ternary group if and only if it satisfies the conditions \eqref{a3}, \eqref{eq:23} and is entropic.
\end{theorem}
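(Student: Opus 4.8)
The plan is to prove both implications, with the forward one being bookkeeping and the backward one carrying all the work; the backward direction will be organized so as to reduce everything to Lemma \ref{lm:12assemi}.

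\emph{Forward direction.} Assume $(A,[\, ])$ is a knot-theoretic ternary group. By Corollary \ref{cor:knot} it is semi-commutative and satisfies \eqref{eq:23}, and being a ternary group it satisfies \eqref{sk:2}; taking $b=x$ in \eqref{sk:2} gives $[\bar{a}ax]=x$ and $[xa\bar{a}]=x$, which is exactly \eqref{a3}. Finally, semi-commutativity forces entropicity by \cite[Theorem 3]{GlaGle}. So all three listed properties hold, and this direction is immediate.

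\emph{Backward direction: preliminaries.} Now assume $(A,[\, ])$ satisfies \eqref{a3}, \eqref{eq:23} and is entropic. First I would record that $\overline{\bar{a}}=a$: by \eqref{a3} (with $x=a$) one has $[\bar{a}aa]=a$, while \eqref{eq:23} gives $[\bar{a}aa]=\overline{\bar{a}}$. Substituting $\bar{a}$ for $a$ in \eqref{a3} and using $\overline{\bar{a}}=a$ then upgrades \eqref{a3} to the four one-sided cancellation rules $[\bar{a}ax]=[a\bar{a}x]=x$ and $[xa\bar{a}]=[x\bar{a}a]=x$. These rules, together with \eqref{eq:23} read as $[abb]=\bar{a}$, are the only facts the rest of the argument will use. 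The goal is then to establish semi-commutativity and $(1,2)$-associativity; combined with \eqref{eq:23} and the first half $[\bar{a}ab]=b$ of \eqref{a3} (which is \eqref{a3a}), Lemma \ref{lm:12assemi} will immediately conclude that $(A,[\, ])$ is a knot-theoretic ternary group.

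\emph{Backward direction: the entropic engine.} The idea is to read the entropic law as a statement about a $3\times 3$ array, namely that combining the three rows and then combining the three results equals combining the three columns and then combining them; one then pads rows or columns with a ``skew pair'' $s,\bar{s}$ so that they collapse under the cancellation rules. For semi-commutativity I would take the array with rows $(a,b,c)$, $(d,s,\bar{s})$, $(e,t,\bar{t})$: the row side is $[[abc]de]$, the column side is $[[ade][bst][c\bar{s}\bar{t}]]$, and choosing $t=\bar{s}$ collapses the last two columns to the exchange identity $[[abc]de]=[[ade]bc]$. Setting $b=\bar{a}$ (so the inner bracket becomes $c$) and then $d=\bar{a}$ in this identity yields $[c\bar{a}e]=[e\bar{a}c]$; since $x\mapsto\bar{x}$ is a bijection, $\bar{a}$ ranges over all of $A$, so this is precisely $[xyz]=[zyx]$. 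Associativity is obtained the same way: padding to rows $(a,b,c)$, $(s,\bar{s},d)$, $(\bar{s},s,e)$ gives the ``outer'' identity $[[abc]de]=[ab[cde]]$, which I will call (A), while padding to rows $(a,f,\bar{f})$, $(b,c,d)$, $(\bar{g},g,e)$ and specializing $g=\bar{c}$ gives $[a[bcd]e]=[[abc]f[\bar{f}de]]$; applying (A) twice then reabsorbs the padding to $[ab[cde]]$, so $[[abc]de]=[a[bcd]e]=[ab[cde]]$, i.e. full associativity.

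\emph{Main obstacle.} The delicate step is associativity, not semi-commutativity. Semi-commutativity falls out of a single padded instance of the entropic law, but the entropic law never directly produces an outer bracket carrying the inner product in its \emph{middle} slot (the shape $[a[bcd]e]$), and semi-commutativity only interchanges the outer first and third slots, hence cannot move a central bracket. The way around this is to realize $[a[bcd]e]$ as a genuine product of three brackets by writing $a=[af\bar{f}]$ and $e=[\bar{g}ge]$, apply the entropic law to the resulting array, and then use (A) twice to collapse the auxiliary variables; this is the one place where a little ingenuity beyond routine padding is required. With full associativity and semi-commutativity both in hand, Lemma \ref{lm:12assemi} applies verbatim and completes the proof.
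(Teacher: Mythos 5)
Your proof is correct, and its skeleton coincides with the paper's: both directions are handled the same way, the four cancellation rules $[a\bar a x]=[\bar a ax]=[xa\bar a]=[x\bar a a]=x$ are derived identically from \eqref{eq:23} and \eqref{a3} via $\bar{\bar a}=a$, and both arguments conclude by feeding semi-commutativity and $(1,2)$-associativity into Lemma \ref{lm:12assemi}. Where you diverge is in the execution of the entropic paddings, and the ``main obstacle'' you describe is not actually there: the entropic law \emph{does} directly produce an outer bracket carrying the inner product in its middle slot. The paper uses the single array with rows $(a,b,c)$, $(d,c,\bar c)$, $(\bar d,d,e)$:
\[
[[abc]de]=[[abc][dc\bar c][\bar d de]]=[[ad\bar d][bcd][c\bar c e]]=[a[bcd]e],
\]
because it is the middle \emph{column} that can be chosen to be $(b,c,d)$ while the outer columns collapse; your impossibility claim conflates rows with columns. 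Consequently your two-array detour for associativity (first the outer identity $[[abc]de]=[ab[cde]]$, then $[a[bcd]e]=[[abc]f[\bar f de]]$, then two reabsorptions) is sound but unnecessary, as is establishing full associativity when Lemma \ref{lm:12assemi} only requires the $(1,2)$-form. Similarly, the paper obtains semi-commutativity from one array, rows $(a,b,\bar b)$, $(\bar a,a,b)$, $(c,\bar a,a)$, yielding $[abc]=[cba]$ in one step, whereas you pass through the exchange identity $[[abc]de]=[[ade]bc]$ and two specializations. Both routes are valid; yours costs a few extra steps and rests on a mistaken structural premise, but every individual computation checks out and the conclusion follows.
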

\begin{proof}
If $(A,[\, ])$ is a knot-theoretic ternary group then, by Corollary \ref{cor:knot}, it is a semi-commutative ternary group, and therefore it is entropic. Also, from the same corollary, it satisfies \eqref{eq:23}, and each ternary group satisfies \eqref{a3}.

Now suppose that an entropic ternary groupoid $(A,[\, ])$ satisfies
\eqref{a3} and \eqref{eq:23}. Then from \eqref{eq:23} and \eqref{a3}, as in the proof of Theorem \ref{exchangesthm}, we obtain
$[a\bar{a}x]=[x\bar{a}a]=x$, for all $a$, $x\in A$. From this and from the entropic property follows
$(1,2)$-associativity, and semi-commutativity:
\[
[[abc]de]=[[abc][dc\bar{c}][\bar{d}de]]=[[ad\bar{d}][bcd][c\bar{c}e]]=[a[bcd]e],
\]
\[
[abc]=[[ab\bar{b}][\bar{a}ab][c\bar{a}a]]=[[a\bar{a}c][ba\bar{a}][\bar{b}ba]]=[cba].
\]
By Lemma \ref{lm:12assemi}, $(A,[\, ])$ is a knot-theoretic ternary group.
\end{proof}

\begin{theorem}
A ternary groupoid $(A,[\, ])$ is a knot-theoretic ternary group derived from a binary group if and only if it satisfies the condition \eqref{a1} and each element in $A$ is neutral.
\end{theorem}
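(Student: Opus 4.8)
The plan is to prove the two implications separately, with essentially all of the work concentrated in the converse. For the forward direction, suppose $(A,[\,])$ is a knot-theoretic ternary group derived from a binary group. By Corollary \ref{commid} it is then derived from an elementary $2$-group $(A,*)$, so $[xyz]=x*y*z$ with $*$ commutative and $x*x$ equal to the identity for every $x$. Both required properties are then immediate: \eqref{a1} holds because $[[abc]de]=a*b*c*d*e=[[adc]be]$ by commutativity of $*$, and every $e\in A$ is neutral since $e*e$ is the identity, giving $[eea]=[eae]=[aee]=a$ in the sense of \eqref{eq:neut}.

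For the converse, assume \eqref{a1} holds and every element is neutral. First I would unpack neutrality \eqref{eq:neut}, taking each element in turn as the neutral one, to obtain the identities $[bba]=[bab]=[abb]=a$ for all $a,b$. The crux is then to feed well-chosen substitutions into \eqref{a1}. Setting $c=b$ and using $[abb]=a$ gives $[ade]=[[adb]be]$; specializing $b=a$ and using $[ada]=d$ yields the transposition $[ade]=[dae]$, while setting $e=d$ in \eqref{a1} gives $[abc]=[[adc]bd]$, whence $d=a$ together with $[aac]=c$ yields $[abc]=[cba]$. Since these two transpositions generate $S_3$, the operation $[\,]$ is fully commutative. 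The same specialization $c=b$ also supplies, after renaming variables, the cancellation identity $[[xyz]zw]=[xyw]$, which will carry the real weight of the argument.

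Finally I would reconstruct the binary group. Fixing any $c\in A$, define $x\oplus y:=[xcy]$. Commutativity of $[\,]$ makes $\oplus$ commutative, and neutrality gives $x\oplus c=[xcc]=x$ and $x\oplus x=[xcx]=c$, so $c$ is an identity and every element is its own inverse. Associativity of $\oplus$ is the one nonroutine point: using commutativity to move the repeated $c$ into the third slot and then applying the cancellation identity with the repeated element equal to $c$, I get $(x\oplus y)\oplus z=[[xcy]cz]=[xyz]$ and, symmetrically, $x\oplus(y\oplus z)=[xyz]$, so $\oplus$ is associative and $(A,\oplus)$ is an elementary abelian $2$-group with $[xyz]=x\oplus y\oplus z$. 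Thus $[\,]$ is derived from $(A,\oplus)$; being a commutative ternary group satisfying \eqref{eq:23} (Lemma \ref{lem:Bool}), it is knot-theoretic by Corollary \ref{cor:knot}, and hence of the desired form by Corollary \ref{commid}. I expect the main obstacle to be precisely the choice of substitutions into \eqref{a1} that simultaneously deliver full commutativity and the cancellation law $[[xyz]zw]=[xyw]$, together with the observation that this cancellation law is exactly what collapses the doubly nested products $[[xcy]cz]$ to $[xyz]$ and thereby makes $\oplus$ associative.
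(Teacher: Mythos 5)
Your proof is correct, and while the forward direction matches the paper's, your converse takes a genuinely different route. The paper derives semi-commutativity, full commutativity, and $(1,2)$-associativity from \eqref{a1} and neutrality, and then funnels everything through Lemma \ref{lm:12assemi} --- which itself rests on the external solvability criterion of Lemma \ref{lm:cor4} --- before finally invoking Corollary \ref{commid} to conclude that the structure is derived from a binary group. You instead extract from \eqref{a1} the two transpositions generating $S_3$ plus the cancellation law $[[xyz]zw]=[xyw]$, and use these to build the retract $x\oplus y=[xcy]$ by hand, verifying directly that it is an elementary abelian $2$-group with $[xyz]=x\oplus y\oplus z$. This gives associativity, the quasigroup property, and the ``derived from a binary group'' conclusion in one stroke, bypassing Lemma \ref{lm:12assemi} entirely; the price is that you must check the group axioms for $\oplus$ yourself, with associativity of $\oplus$ carried by your cancellation identity. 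Your version is more self-contained and constructive (it exhibits the underlying binary group explicitly), whereas the paper's is shorter given the machinery it has already established; note only that your final appeal to Corollary \ref{commid} is redundant, since at that point you have already produced the binary group from which $[\,]$ is derived.
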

\begin{proof}
If $(A,[\, ])$ is a knot-theoretic ternary group derived from a binary group, then it is commutative and idempotent ternary group and
by Corollary \ref{cor:knot} it satisfies \eqref{eq:23}. This implies that \eqref{a1} follows and for every $a,b\in A$, $a=\bar{a}$ and
\begin{align}\label{a5}
[abb]=[bab]=[bba]=a.
\end{align}
Now, let $(A,[\, ])$ be a ternary groupoid satisfying \eqref{a1} and \eqref{a5}.
Then $(A,[\, ])$ is idempotent and from \eqref{a1}, follows semi-commutativity:
\[
[abc]=[[cca]bc]=[[cba]cc]=[cba].
\]
Further, from semi-commutativity we get 
\[
[abc]=[[bab]bc]=[[bbb]ac]=[bac]=[cab]=[[aca]ab]=[acb],
\]
and we obtain that $(A,[\, ])$ is commutative.
Now we show
$(1,2)$-associativity using commutativity and \eqref{a1}:
\[
[[abc]de]=[[bac]de]=[[bdc]ae]=[a[bdc]e]=[a[bcd]e].
\]
As before, by Lemma \ref{lm:12assemi}, $(A,[\, ])$ is a knot-theoretic ternary group. From Corollary \ref{commid} follows that $(A,[\, ])$ is derived from a binary group.
\end{proof}
\begin{theorem}
A ternary groupoid $(A,[\, ])$ is a knot-theoretic ternary group if and only if it is semi-commutative, satisfies conditions \eqref{A3L} and \eqref{a2}, and there exists on $A$ an involutive unary operation $\bar{\ }\colon x\mapsto\bar{x}$ such that $[a\bar{a}x]=x$, for all $a$, $x\in A$.
\end{theorem}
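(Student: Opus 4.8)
The plan is to prove both implications, with almost all of the work in the converse. For the forward direction, suppose $(A,[\,])$ is a knot-theoretic ternary group. By Corollary~\ref{cor:knot} it is semi-commutative and its skew operation satisfies \eqref{eq:23}; condition \eqref{A3L} holds by definition, the skew map is involutive by \eqref{sk:4}, and $[a\bar ax]=x$ is \eqref{sk:2}. Finally \eqref{a2} follows from associativity and semi-commutativity via $[a[bcd]e]=[ab[cde]]=[ab[edc]]=[a[bed]c]$, so the skew operation witnesses all the required conditions.

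For the converse, assume $(A,[\,])$ is semi-commutative, satisfies \eqref{A3L} and \eqref{a2}, and carries an involutive $\bar{\ }$ with $[a\bar ax]=x$. First I would record the full list of near-identities $[a\bar ax]=[\bar aax]=[x\bar aa]=[xa\bar a]=x$, obtained from the hypothesis together with involutivity and semi-commutativity; in particular \eqref{a3} and \eqref{a3a} hold. Next, substituting $d=\bar c$ into \eqref{A3L} and simplifying with these identities gives
\[
[abc]=[[abb]b\bar c],
\]
and specialising $a=\bar b$ yields $[bbc]=\bar c$, hence by semi-commutativity \eqref{eq:23}. With \eqref{eq:23} in hand, Lemma~\ref{lm:1} supplies \eqref{eq:4}, and a further substitution $b=\bar c$ in \eqref{A3L} gives $\overline{[abc]}=[a\bar bc]$, so that bars may be moved in and out of brackets at will. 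These are the routine, low-cost steps.

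The remaining and essential task is to establish associativity; since the groupoid is semi-commutative it suffices to prove $(1,2)$-associativity $[[abc]de]=[a[bcd]e]$ (equivalently the exchange law \eqref{a1}), after which Lemma~\ref{lm:12assemi} applies directly, as \eqref{eq:23}, \eqref{a3a} and semi-commutativity are already available. The hard part will be exactly this re-association, and the obstacle is structural: \eqref{A3L}, being the Reidemeister~III relation, only re-parenthesises terms whose outer middle entry coincides with the inner last entry, whereas the two sides of $(1,2)$-associativity carry five free variables. The intended mechanism is to manufacture the needed coincidence and then remove it without collapsing back. Concretely, one rewrites $[xyz]=[x[yz\bar u]u]$ (valid by \eqref{a2} and the near-identities, taking the auxiliary last entry to be $u$) so that the inner last entry becomes $u$, applies \eqref{A3L}, and then reduces the resulting repeated block using \eqref{eq:23}, \eqref{eq:4} and $\overline{[abc]}=[a\bar bc]$. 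The delicate point is steering this reduction toward the middle-nested form rather than back to the starting term; the insertion identity
\[
[[abc]de]=[[abc][de\bar c]c],
\]
obtained by writing $d=[dc\bar c]$ and applying \eqref{a2}, is the device that lets \eqref{A3L} play the role that \eqref{a1} plays in the proof of Theorem~\ref{exchangesthm}. I expect this bookkeeping over the repeated block to be where the proof is genuinely technical.

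Once $(1,2)$-associativity is secured, the conclusion is immediate: by Lemma~\ref{lm:12assemi} the groupoid is a knot-theoretic ternary group. (Alternatively, semi-commutativity upgrades $(1,2)$-associativity to full associativity by Remark~\ref{rm:3}, and the near-identities solve $[xaa]=[aay]=b$ by $x=[b\bar a\bar a]$, $y=[\bar a\bar a b]$, so Lemma~\ref{lm:cor4} gives a ternary group, which is knot-theoretic by Remark~\ref{rm:2}.)
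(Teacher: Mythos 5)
Your forward direction is correct, and your derivation of \eqref{eq:23} works (substituting $d=\bar c$ and then $a=\bar b$ into \eqref{A3L} is a small variation on the paper, which instead passes to \eqref{A3R} via Remark \ref{rm:2} and sets $c=\bar b$, $d=b$). Two problems remain. A minor one: you invoke Lemma \ref{lm:1} to obtain \eqref{eq:4}, but that lemma is stated for ternary groups and its proof uses associativity, which is exactly what you do not yet have. Your direct substitution $b=\bar c$ in \eqref{A3L} does give $\overline{[abc]}=[a\bar{b}c]$ without associativity, but together with semi-commutativity this only lets you bar the \emph{middle} entry; to move bars to the outer positions you need a further argument, e.g.\ the paper's computations $[\bar{a}bd]=[[abb]bd]=[[ab[bbd]][bbd]d]=[[ab\bar{d}]\bar{d}d]=[ab\bar{d}]$ (from \eqref{A3L} with $c=b$) and $[ab\bar{c}]=[a[\bar{b}cc]\bar{c}]=[a[\bar{b}\bar{c}c]c]=[a\bar{b}c]$ (from \eqref{a2}).

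The decisive gap is associativity, which you yourself identify as the essential task but do not prove: you describe a mechanism (rewrite so that \eqref{A3L} becomes applicable, then reduce the repeated block) and explicitly leave open the ``delicate point'' of steering the reduction away from the starting term. That is precisely where the content of the theorem lies, so the argument is incomplete, and it is not clear the proposed mechanism can be made to work. The paper sidesteps this difficulty entirely: \eqref{A3L} is \emph{not} used for re-association at all (its only role is to produce \eqref{eq:23} and the bar-moving identities). Instead, $(2,3)$-associativity is proved by the substitution $b=[\bar{b}cc]$ followed by two applications of \eqref{a2}:
\[
[ab[cde]]=[a[\bar{b}cc][cde]]=[a[\bar{b}[cde]c]c]=[a[\bar{b}[cce]d]c]=[a[\bar{b}\bar{e}d]c]=[a[bed]c]=[a[bcd]e],
\]
using $[cce]=\bar{e}$ and $[\bar{x}\bar{y}z]=[xyz]$; then $(1,2)$-associativity follows from $(2,3)$-associativity, semi-commutativity and \eqref{a2} via $[[abc]de]=[ed[abc]]=[e[dab]c]=[e[bad]c]=[e[bcd]a]=[a[bcd]e]$, and Lemma \ref{lm:12assemi} finishes. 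To repair your proof, abandon the attempt to make \eqref{A3L} carry the re-association and use the $b=[\bar{b}cc]$ device instead.
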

\begin{proof}
If $(A,[\, ])$ is a knot-theoretic ternary group, then by definition it satisfies
\eqref{A3L}. The skew element operation is involutive and satisfies $[a\bar{a}x]=x$, for all $a$ and $x\in A$. Property \eqref{a2} follows from associativity and semi-commutativity.

Now suppose that a ternary groupoid $(A,[\, ])$ satisfies the properties from the second part of the theorem. Then semi-commutativity gives
\[
[xa\bar{a}]=[\bar{a}ax]=[\bar{a}\bar{\bar{a}}x]=x,
\]
and
\[
[x\bar{a}a]=[a\bar{a}x]=x.
\]
By Remark \ref{rm:2} the second knot-theoretic property \eqref{A3R} follows from \eqref{A3L}. Now as consequence of \eqref{A3R} for $c=\bar{b}$ and $d=b$ we obtain for any $a,b\in A$:
\begin{align*}
[abb]=[ab[b\bar{b}b]]=[a[ab\bar{b}][[ab\bar{b}]\bar{b}b]]=[aaa]\quad \Rightarrow\quad [bba]=[abb]=[a\bar{a}\bar{a}]=\bar{a}.
\end{align*}
\noindent
From \eqref{A3L} for $c=b$ follows that for $a,b,d\in A$
\[
[\bar{a}bd]=[[abb]bd]=[[ab[bbd]][bbd]d]=[[ab\bar{d}]\bar{d}d]=[ab\bar{d}].
\]
From \eqref{a2} we have
\[
[\bar{a}bc]=[ab\bar{c}]=[a[\bar{b}cc]\bar{c}]=[a[\bar{b}\bar{c}c]c]=[a\bar{b}c].
\]
Thus,
\[
[\bar{a}\bar{b}c]=[a\bar{\bar{b}}c]=[abc].
\]
Finally, we obtain $(2,3)$-associativity:
\[
[ab[cde]]=[a[\bar{b}cc][cde]]=[a[\bar{b}[cde]c]c]=[a[\bar{b}[cce]d]c]=
[a[\bar{b}\bar{e}d]c]=[a[bed]c]=[a[bcd]e].
\]
From $(2,3)$-associativity and property \eqref{a2}, we get $(1,2)$-associativity:
\[
[[abc]de]=[ed[abc]]=[e[dab]c]=[e[bad]c]=[e[bcd]a]=[a[bcd]e].
\]
By Lemma \ref{lm:12assemi}, $(A,[\, ])$ is a knot-theoretic ternary group.
\end{proof}

\section{Geometric application of knot-theoretic ternary groups}\label{S:FVL}

We now use knot-theoretic ternary groups to study curves immersed in compact surfaces. We do it in two ways: first abstractly, via unoriented flat virtual links, and then concretely, by considering curves immersed in compact surfaces that do not need to be orientable, and can have boundary components.

\subsection{Knot-theoretic ternary groups and flat virtual links}
Here we recall basic definitions concerning flat virtual links, for a fundamental introduction to virtual knot theory, see \cite{Kau99}. We only mention that in virtual link diagrams there are both classical, and virtual crossings (denoted with little circles around them). Virtual crossings appear when a diagram placed on a surface (of some genus $g$) is projected on the plane. In flat virtual knot diagrams classical crossings are replaced with their flat version, in which there is no indication which string of the crossing was higher before the projection; such flat crossings appear in the figures in this paper. We now give a formal definition of flat virtual links.

\begin{figure}
\begin{center}
\includegraphics[height=8.5cm]{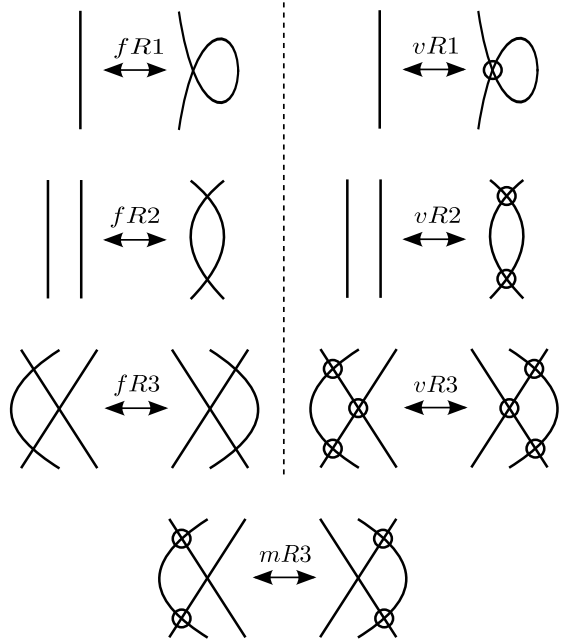}
\caption{Flat virtual moves}\label{flatmoves}
\end{center}
\end{figure}

\begin{de}
A {\it flat virtual link diagram} is a 4-regular plane graph with two types of vertices (also called crossings): flat and virtual.

A {\it flat virtual link} is an equivalence class on flat virtual link diagrams  generated by planar isotopy and the set of moves illustrated in Fig. \ref{flatmoves}. On the left of the figure there are the {\it flat Reidemeister moves}, on the right there are the {\it virtual Reidemeister moves}, and the last move  is the {\it mixed Reidemeister type three move}.
\end{de}

In \cite{NelPic} a strategy was used of utilizing two ternary operations, satisfying mixed axioms, for oriented virtual links. One operation was for the classical crossings, and the other for the virtual ones. Orientation is helpful in distinguishing the four regions around a crossing, however, in our construction we will not assume it. Because of this lack of orientation, we need a simple way of collecting inputs for ternary operations, and this simplicity corresponds to the structure of knot-theoretic ternary groups.

\begin{figure}
\begin{center}
\includegraphics[height=4cm]{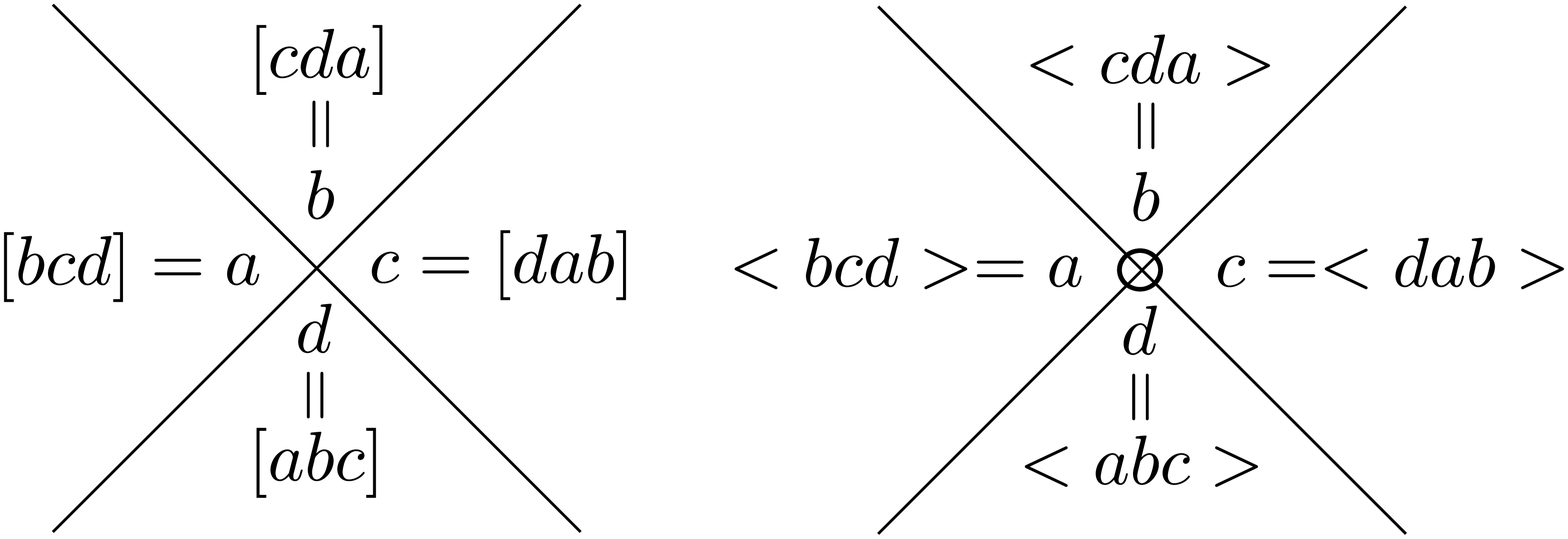}
\caption{Coloring of the regions around flat and virtual crossings}\label{fvcolrule}
\end{center}
\end{figure}

In the invariant, we use two knot-theoretic ternary group structures $(A,[\, ])$ and $(A,<\, >)$ on the same set $A$, the first for the flat crossings and the second for the virtual crossings.
Let $D$ be a diagram of a flat virtual link. Coloring conventions for the flat and virtual crossings are shown in Fig. \ref{fvcolrule}. More specifically, a color is expressed as a ternary product of the other three colors near the crossing taken in a cyclic order; it does not matter if this order is clockwise or counter-clockwise, since any knot-theoretic ternary group is semi-commutative. If there is a consistent way of assigning group elements to the regions in the entire diagram $D$, then we call it a {\it knot-theoretic ternary group coloring} of $D$.
Note that the properties of knot-theoretic ternary groups ensure that the four relations around the crossings in Fig. \ref{fvcolrule} are equivalent, for example:
\begin{align*}
a&=[bcd]\Leftrightarrow b=[a\bar{d}\bar{c}]=[a\bar{\bar{d}}c]=[adc]=[cda],\\
a&=[bcd]\Leftrightarrow c=[\bar{b}a\bar{d}]=[\bar{\bar{b}}ad]=[bad]=[dab],\\
a&=[bcd]\Leftrightarrow d=[\bar{c}\bar{b}a]=[\bar{\bar{c}}ba]=[cba]=[abc].
\end{align*}

\begin{de}
We say that two ternary groups $(A,[\, ])$ and $(A,<\, >)$ are {\it compatible}
if the following condition is satisfied:
\begin{equation}\label{compatible}
[ab<bcd>]=<a<abc>[<abc>cd]>,
\end{equation}
for any $a$, $b$, $c$, $d\in A$.
\end{de}
\begin{example}
Any knot-theoretic ternary group is compatible with itself.
\end{example}

\begin{example}\label{Zkcont}
Let $(A,+)$ be a finite abelian binary group (of an even rank) with a neutral element $0$ and let $x$ be an element of order 2.
Consider two (non-isomorphic) knot-theoretic ternary groups: $(A,[\;])=\T((A,+),0)$ and $(A,<\;>)=\T((A,+),x)$. They
are compatible. Specifically, for any $a$, $b$, $c$, $d\in A$, the condition \eqref{compatible} holds:
\[
[ab<bcd>]=a-b+(b-c+d+x)=a-c+d+x=<acd>,
\]
\[
<a<abc>[<abc>cd]>=a-<abc>+<abc>-c+d+x=a-c+d+x=<acd>.
\]
\end{example}

Now we will show that there are non compatible knot-theoretic ternary groups defined on the same set.
\begin{example}
Let $k>1$ be a natural number. Let $(A,\cdot)\simeq (\Z_{2^k},+_{2^k})$ and $(A,+)\simeq (\Z_2^k,+)$ be two non-isomorphic abelian groups of rank $2^k$ and let $0$ be the neutral element in $(A,+)$, $b$ an element of order 2 in $(A,\cdot)$ and $c$ an element of order 2 in $(A,+)$ such that $c^2\neq 0^2$. Take two knot-theoretic ternary groups
$(A,[\;])=\T((A,\cdot),b)$ and $(A,<\;>)=\T((A,+),c)$.

Then by \eqref{eq:22} and Corollary \ref{cor:count} we have
\[
[0c<ccc>]=[0c(c+ c)]=[0c0]=0\cdot c^{-1}\cdot 0\cdot b=0^2\cdot c^{-1}\cdot b
\]
and
\begin{align*}
& <0<0cc>[<0cc>cc]>=<0(0+ c)[(0+ c)cc]>=\\
&<0(0+ c)((0+ c)\cdot b)>=0+(0+ c)+(0+ c)\cdot b+c=c\cdot b.
\end{align*}
Since, by assumption $c^2\neq 0^2$, the condition \eqref{compatible} does not hold.
\end{example}

\begin{figure}
\begin{center}
\includegraphics[height=4cm]{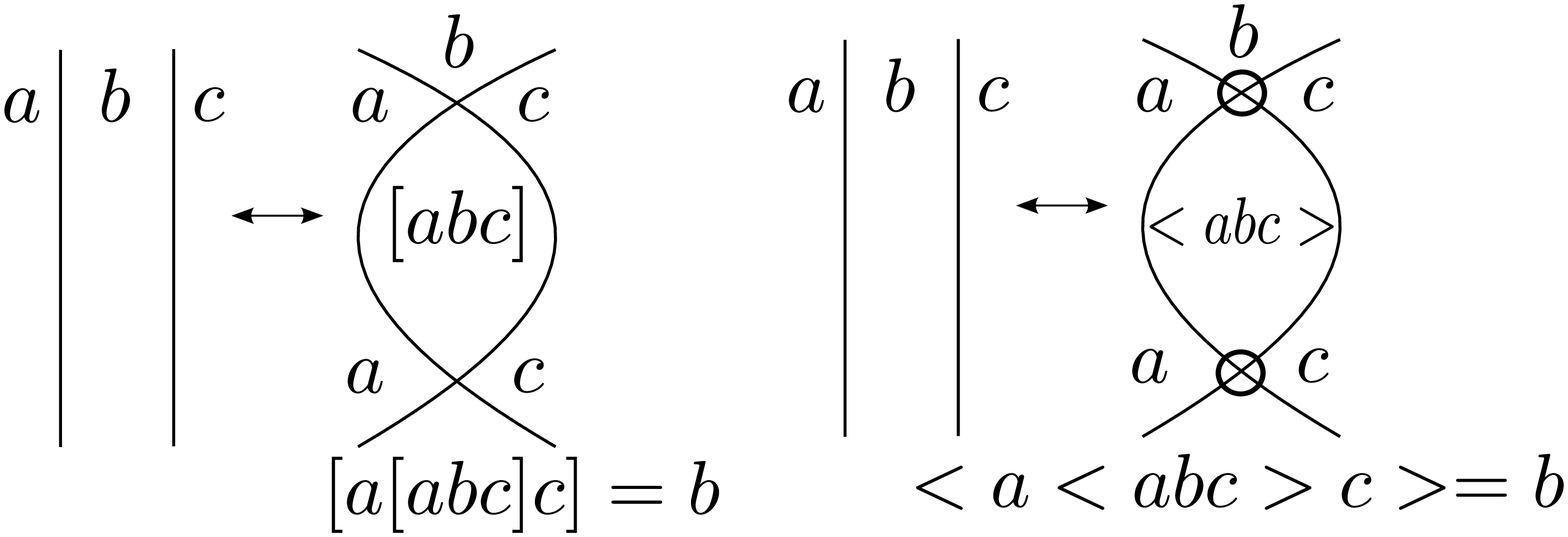}
\caption{Involutions from the second flat and virtual Reidemeister moves}\label{fvR2col}
\end{center}
\end{figure}

\begin{figure}
\begin{center}
\includegraphics[height=6cm]{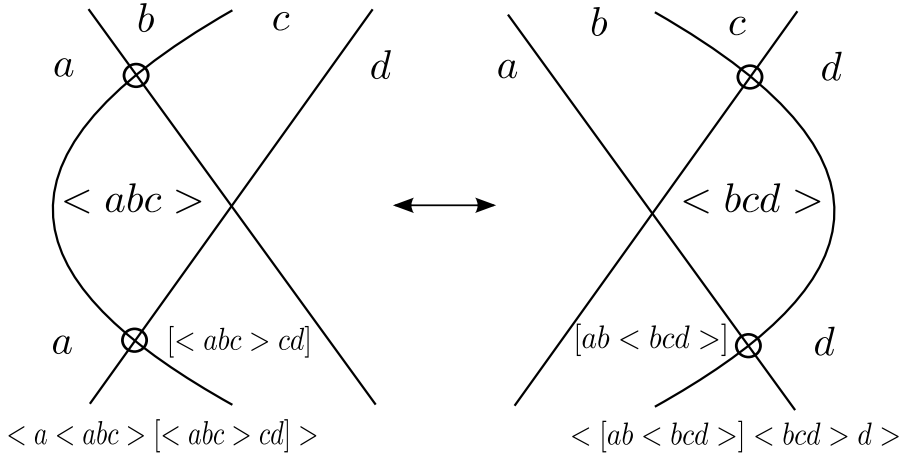}
\caption{A flat mixed move and knot-theoretic ternary group coloring}\label{mixedfv}
\end{center}
\end{figure}

\begin{theorem}\label{cols}
For a pair of compatible knot-theoretic ternary groups $(A,[\, ])$ and
$(A,<\, >)$, and a diagram $D$ of a flat virtual link, the number of knot-theoretic ternary group colorings of $D$ is not changed by the Reidemeister moves used for flat virtual links.
\end{theorem}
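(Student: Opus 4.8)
The plan is to establish invariance locally, one move at a time. Each move in Fig.~\ref{flatmoves} alters a diagram only inside a small disk, and the colors assigned to the regions meeting the boundary of that disk are shared by the diagrams on both sides of the move. I would therefore fix these boundary colors and, for each move, exhibit a bijection between the colorings of the interior regions before the move and those after it. Because the coloring outside the disk is left untouched, these local bijections assemble into a global bijection on the full set of knot-theoretic ternary group colorings, so the number of colorings is preserved. Throughout I would rely on two facts already recorded before the theorem: the four relations around any crossing are equivalent (for $[\, ]$ at a flat crossing and for $<\, >$ at a virtual one), and, by the quasigroup property, any single region around a crossing is uniquely determined by the other three.

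For the second Reidemeister moves, flat and virtual, the local picture is a bigon formed by two strands. Fixing the two regions on the sides, the equivalence $a=[bcd]\Leftrightarrow b=[adc]$ shows that passing through the two crossings realizes the determination map $d\mapsto[adc]$, and one checks (using \eqref{eq:23} together with $\bar{\bar a}=a$) that this map is an involution, exactly the phenomenon depicted in Fig.~\ref{fvR2col}; hence the far regions agree on the two sides and the bigon region is uniquely recovered, yielding the bijection. For the first Reidemeister moves a strand loops to a kink in which two of the regions meeting the single new crossing coincide; the crossing relation then collapses, via \eqref{eq:23} and the identity $[ccc]=\bar c$ from Corollary~\ref{cor:knot}, to a condition that is automatically satisfied by the outer regions, while the unique new interior region is pinned down by the bijection $x\mapsto\bar x$. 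In each case the count is unchanged.

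For the third Reidemeister moves I would read off, from the triangle configuration of Fig.~\ref{flatR3}, the colors forced on the central regions before and after sliding a strand across the crossing. Their equality is precisely the content of the axioms \eqref{A3L} and \eqref{A3R}, which hold because $(A,[\, ])$ is a knot-theoretic ternary group; the purely virtual third move is handled identically using that $(A,<\, >)$ is knot-theoretic as well. The remaining case is the mixed third move of Fig.~\ref{mixedfv}, in which a flat crossing slides past a virtual one: here the two colorings of the distinguished region evaluate to $[ab<bcd>]$ on one side and to $<a<abc>[<abc>cd]>$ on the other, and their coincidence is exactly the compatibility condition \eqref{compatible}. This is the sole place where the hypothesis that $(A,[\, ])$ and $(A,<\, >)$ are compatible is invoked.

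I expect the main obstacle to be bookkeeping rather than algebra: correctly interpreting each figure to decide which regions are common to both sides of the move, which are forced, and which of the four equivalent crossing relations to apply, so that the resulting identity matches the intended axiom term for term. Once the region labels are fixed consistently with the conventions of Fig.~\ref{fvcolrule}, every move reduces to one of \eqref{A3L}, \eqref{A3R}, \eqref{compatible}, \eqref{eq:23}, the involutivity $\bar{\bar a}=a$, or unique solvability, and no further computation is required to conclude that each local map is a bijection and hence that the total number of colorings is a flat virtual link invariant.
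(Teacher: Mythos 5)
Your proposal is correct and follows essentially the same route as the paper: a move-by-move local bijection argument using unique solvability for the first move, the involution $[a[abc]c]=b$ for the second, the axioms \eqref{A3L} and \eqref{A3R} for the third, and the compatibility condition \eqref{compatible} (together with its semi-commutative companion $[<abc>cd]=<[ab<bcd>]<bcd>d>$) for the mixed move. The paper's proof is just a terser version of the same argument, deferring the details to the figures.
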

\begin{proof}
The argument is analogous for strictly flat moves and strictly virtual moves
(Fig. \ref{flatmoves}). The first flat/virtual move does not change the number of colorings, because the color of the corner in the kink is uniquely determined by the colors of the other three corners of the crossing. The invariance under the second flat/virtual move is a consequence of the involutions $[a[abc]c]=b$
and $<a<abc>c>=b$ that hold in knot-theoretic ternary groups, see Fig. \ref{fvR2col}. The axioms \eqref{A3L} and \eqref{A3R} (for both $[\, ]$ and
$<\, >$) make the number of colorings invariant under the third flat/virtual Reidemeister move, see Fig. \ref{flatR3} for illustration for the operation $[\, ]$. Finally, the invariance under the mixed move is shown in Fig. \ref{mixedfv}.
It requires the condition \eqref{compatible} and the second condition that follows from it by semi-commutativity:
\[
[<abc>cd]=<[ab<bcd>]<bcd>d>.
\]
\end{proof}

\begin{figure}
\begin{center}
\includegraphics[height=5cm]{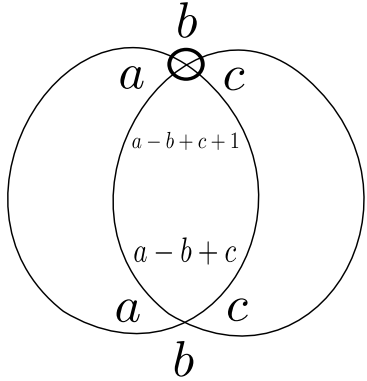}
\caption{The flat-virtual Hopf link is distinguished from the unlink by two-element ternary groups}\label{hopfZ2}
\end{center}
\end{figure}

\begin{example}
Fig. \ref{hopfZ2} illustrates how knot-theoretic ternary group colorings can be used to establish that a given diagram represents a nontrivial flat virtual link. The knot-theoretic ternary groups $(A,[\, ])$ and $(A,<\, >)$ used here are the ones from Example \ref{Zkcont}, more precisely, we take the cyclic group $(\mathbb{Z}_2,+)$ and use the operations
$[abc]=a-b+c \pmod 2$ and $<abc>=a-b+c+1 \pmod 2$. As is clear from Fig. \ref{hopfZ2}, there are no colorings using this pair of compatible knot-theoretic ternary groups.
The number of colorings for the unlink (two disjoint unknotted loops) is $2^3=8$. Thus, the diagram represents a nontrivial flat virtual link.
\end{example}

\subsection{Knot-theoretic ternary groups and curves immersed in compact surfaces}

Let $F$ be a compact surface, possibly with boundary. Let $f\colon\bigsqcup_i S_i^1\sqcup\bigsqcup_k I_k\to F$ be a local embedding from $n$ disjoint circles and $m$ disjoint arcs to $F$ such that precisely the endpoints of the arcs are sent to the boundary components of $F$, if there are any (if not, we only take immersions of circles). Assume that $|f^{-1}f(x)|<3$ for all $x \in \bigsqcup_i S_i^1\sqcup\bigsqcup_k I_k$, that is, we allow at most double points. For simplicity, we let $|f^{-1}f(x)|=1$, if $x$ is an arc endpoint.  We consider images of such maps (diagrams on $F$) up to flat Reidemeister moves and isotopy on F. We let the images of the arc endpoints slide on the boundary of $F$ and pass one through another. We call such an equivalence class a {\it relative flat link} on $F$ if it involves any arcs, or simply a {\it flat link} on $F$ if there are no arcs.

To distinguish (relative) flat link diagrams on a given surface $F$,
we use the same coloring convention as in the previous subsection (Fig. \ref{fvcolrule}). Now, however, only one knot-theoretic ternary group $(A,[\, ])$ is needed, as there are no virtual crossings. We assign elements of $(A,[\, ])$ to the regions of the complement of the (relative) flat link diagram in $F$.
Then we have a theorem analogous to the Theorem \ref{cols}.

\begin{theorem}\label{colsF}
Given a compact surface $F$, a knot-theoretic ternary group $(A,[\, ])$ and a (relative) flat link diagram $D$ on $F$, the number of knot-theoretic ternary group colorings of $D$ is a (relative) flat link invariant.
\end{theorem}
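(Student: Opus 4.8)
The plan is to prove Theorem \ref{colsF} by reducing it to the invariance already established, in effect, in the proof of Theorem \ref{cols}, since the colorings here are governed by a single knot-theoretic ternary group and there are no virtual crossings. The strategy is to show that the number of colorings is unchanged by each of the three flat Reidemeister moves (Fig. \ref{flatmoves}, left) and by isotopy on $F$, including the allowed sliding of arc endpoints along $\partial F$; since the relative flat link is an equivalence class generated precisely by these operations, invariance under each generator yields that the coloring count is a well-defined function on equivalence classes, i.e. a (relative) flat link invariant.

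First I would fix the coloring convention (Fig. \ref{fvcolrule}): at each flat crossing, one region-color is the ternary product $[\,]$ of the other three taken in cyclic order, and I would recall from the discussion preceding Definition of compatibility that semi-commutativity makes this assignment independent of the clockwise/counter-clockwise reading and that the four relations around a crossing are mutually equivalent. With this in place, the first flat move is handled as in Theorem \ref{cols}: the new corner created by the kink has its color uniquely determined by the other three corners, so adding or removing a kink sets up a bijection between colorings before and after. The second flat move is handled by the involution $[a[abc]c]=b$, valid in every knot-theoretic ternary group by \eqref{a3} and semi-commutativity, which again gives a bijection between the colorings of the two sides. The third flat move is exactly the situation of Fig. \ref{flatR3}: the equality of the region-colors on the two sides is forced by axioms \eqref{A3L} and \eqref{A3R}, so the set of admissible colorings is in bijection across this move as well. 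These three cases are precisely the flat (non-virtual, non-mixed) part of the proof of Theorem \ref{cols}, so I can invoke that argument verbatim, now for the single operation $[\,]$.

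The genuinely new points, relative to Theorem \ref{cols}, are the surface $F$ itself and the presence of arcs. For isotopy on $F$ that does not pass through crossings, the combinatorial data of regions and their adjacencies is unchanged, so colorings correspond trivially. The subtler ingredient is the sliding of arc endpoints along $\partial F$ and their passing through one another: I would argue that since an arc endpoint is a simple (non-crossing, $|f^{-1}f(x)|=1$) point lying on the boundary, such a slide does not create, destroy, or alter any crossing relation, and it does not merge or split regions of $F\setminus D$ in a way that adds a constraint; hence it induces a bijection on colorings. The main obstacle I anticipate is precisely this boundary bookkeeping: one must verify carefully that the regions of the complement in a surface with boundary — where regions can be bounded partly by $\partial F$ — are tracked correctly as endpoints slide and cross, and that no hidden coloring constraint is introduced or lost; once it is checked that region-adjacency through crossings is the only source of constraints, this reduces to the same bijective argument.

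Finally I would assemble these observations: the count of knot-theoretic ternary group colorings is invariant under each generator of the equivalence relation defining (relative) flat links on $F$, so it descends to a well-defined invariant on equivalence classes, which is exactly the assertion of Theorem \ref{colsF}. I expect the proof to be short, with its content concentrated in the move-by-move bijections (inherited from Theorem \ref{cols}) and in the verification that boundary slides of arc endpoints are coloring-neutral.
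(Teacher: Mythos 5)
Your proposal is correct and matches the paper's (implicit) argument: the paper gives no separate proof of Theorem~\ref{colsF}, stating only that it is ``analogous to Theorem~\ref{cols},'' and your write-up is exactly that analogy --- the flat Reidemeister moves handled by the quasigroup property, the involution $[a[abc]c]=b$, and axioms \eqref{A3L}--\eqref{A3R}, with the virtual and mixed cases dropped. Your additional care about boundary regions and sliding arc endpoints (where the only point to check is that any small region created or destroyed near $\partial F$ is adjacent to a single crossing and hence has uniquely determined color) goes slightly beyond what the paper records, and is the right thing to verify.
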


Due to the semi-commutative nature of knot-theoretic ternary groups, we do not require that the surface $F$ be orientable. On a non-orientable surface, we cannot distinguish between ``clockwise" and ``counter-clockwise", but that corresponds exactly to the property $[abc]=[cba]$.

\begin{figure}
\begin{center}
\includegraphics[height=4.5cm]{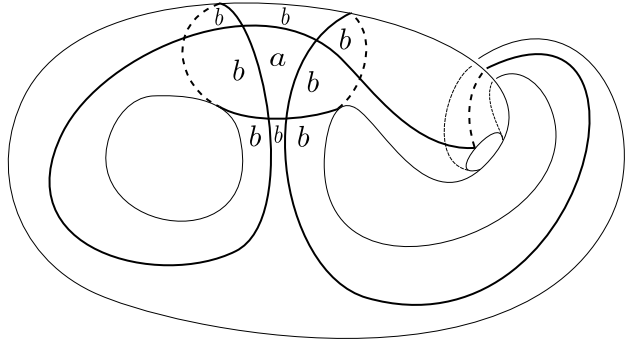}
\caption{The Kishino curve on a non-orientable surface}\label{Kishino}
\end{center}
\end{figure}

\begin{example}
Fig. \ref{Kishino} shows a realization of the flat Kishino knot on a connected sum of a torus and a Klein bottle. There are two connected surface regions in the complement of this curve. Therefore, given a knot-theoretic ternary group  $(A,[\,])=\T((A,+),\bar{e})$, any coloring of this diagram can use at most two colors: $a$ and $b$. There are four flat crossings, but the relations assigned to them are all the same: $a=b-b+b+\bar{e}$. Thus, the color $b$ determines the color $a$, and the number of colorings of the diagram is equal to the order of $(A,[\,])$. This distinguishes the Kishino curve from a trivial (separating) loop on this surface, for which the number of colorings would be $|A|^2$.
\end{example}

\begin{figure}
\begin{center}
\includegraphics[height=4cm]{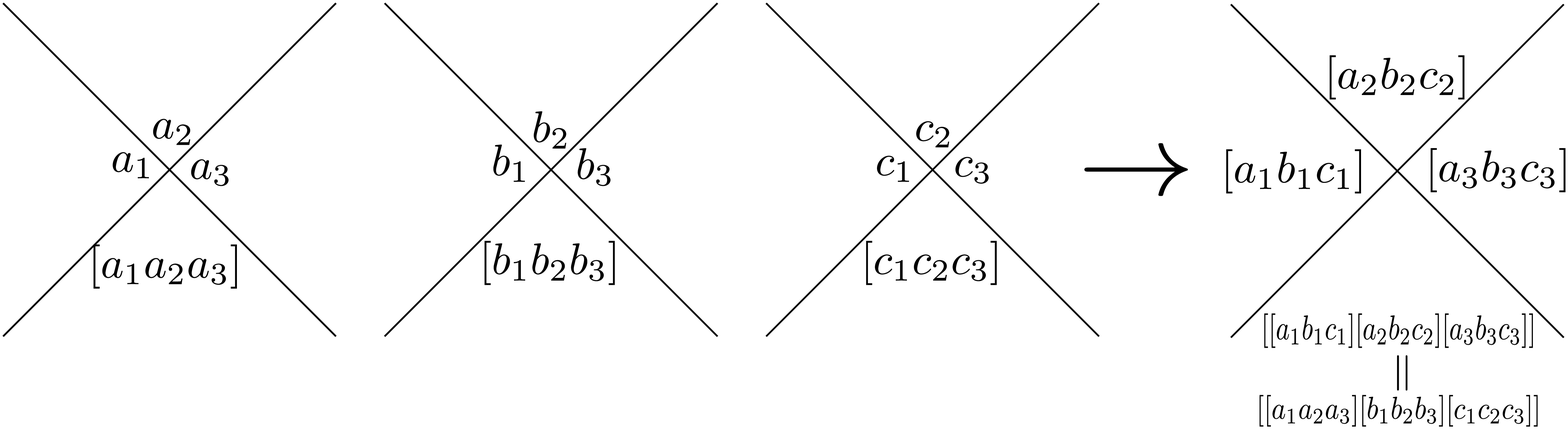}
\caption{Entropic property yields a ternary product on the set of colorings}\label{entropic}
\end{center}
\end{figure}

\begin{remark} The fact that knot-theoretic ternary groups are entropic allows to introduce the knot theoretic ternary group structure on the set of colorings of a given diagram $D$. It is a consequence of the general theory of entropic algebras presented in \cite{RomSmi}. The ternary operation on colorings is performed region-wise, as in Fig. \ref{entropic}. The right side of the figure shows the consistency between the ternary product on colorings and the relation assigned to a crossing, thanks to the entropic condition. The isomorphism class of such knot-theoretic ternary group of colorings is a (relative) flat link invariant.
\end{remark}

\begin{figure}
\begin{center}
\includegraphics[height=9cm]{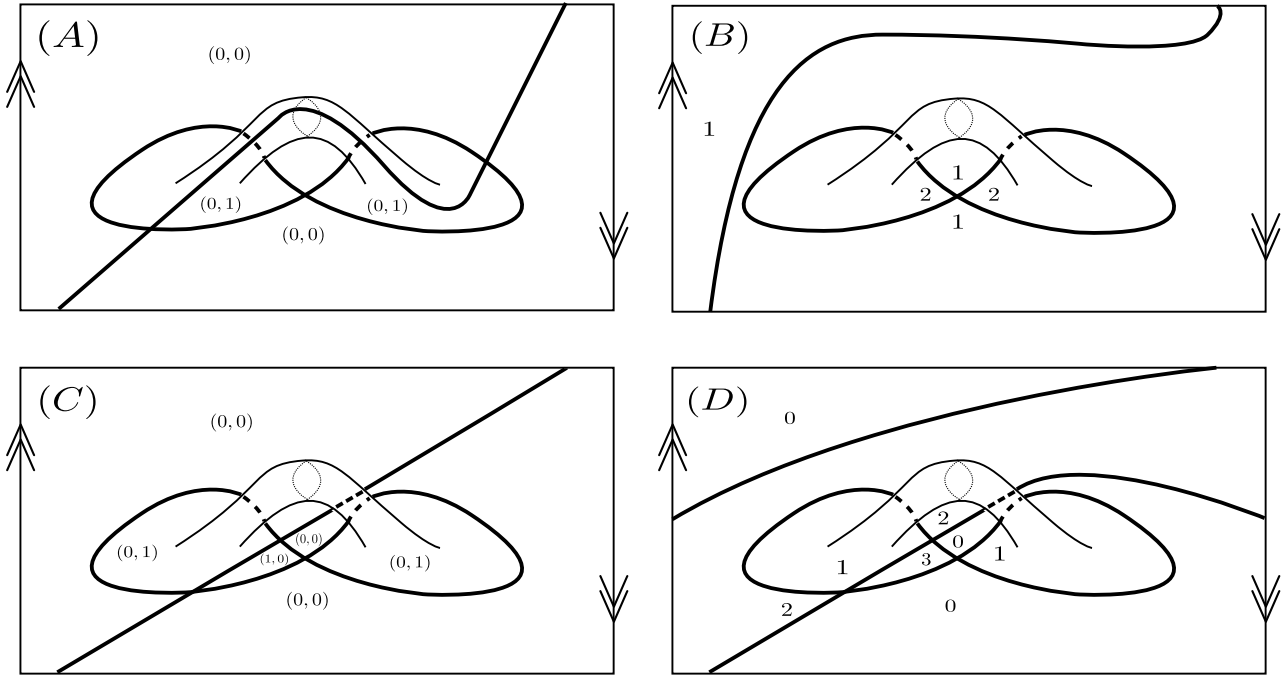}
\caption{Four different relative flat links on a M{\"o}bius band with a handle }\label{Mobiushand}
\end{center}
\end{figure}

\begin{example}
In Fig. \ref{Mobiushand} there are four diagrams of relative flat links, each consisting of an arc and an immersed circle, placed on a M{\"o}bius band with a handle (the sides of the figures are identified according to the arrows). Up to isomorphism, there are four knot-theoretic ternary groups of order four, and we will check how efficient they are in distinguishing these links. We will use $\T((\Z_4,+),0)$, $\T((\Z_4,+),2)$, $\T((\Z_2\times \Z_2,+),(0,0))$ and $\T((\Z_2\times \Z_2,+),(1,1))$. In this order of groups, the numbers of colorings of the diagrams are as follows:
\begin{align*}
&(A):[8,0,16,0],\\
&(B):[8,8,16,0],\\
&(C):[8,8,16,16],\\
&(D):[16,16,16,16].
\end{align*}
Thus, we see that the non-idempotent ternary groups $\T((\Z_4,+),2)$ and $\T((\Z_2\times \Z_2,+),(1,1))$ are sufficient to prove that the four diagrams represent different relative flat links. The figures also show examples of colorings. Fig. \ref{Mobiushand}(A) indicates a coloring with $\T((\Z_2\times \Z_2,+),(0,0))$, and Fig. \ref{Mobiushand}(B) a coloring with $\T((\Z_4,+),2)$.
In Fig. \ref{Mobiushand}(C) $\T((\Z_2\times \Z_2,+),(1,1))$ is used, and as an example we list the involved equations (in order: left, middle, and right crossing):
\begin{align*}
&(0,0)=(0,1)-(1,0)+(0,0)+(1,1),\\
&(1,0)=(0,1)-(0,0)+(0,0)+(1,1),\\
&(1,0)=(0,0)-(0,1)+(0,0)+(1,1).
\end{align*}
Finally, Fig. \ref{Mobiushand}(D) shows a coloring with $\T((\Z_4,+),0)$.
\end{example}


\begin{thebibliography}{99}
\bibitem{BorDuDu}
A. Borowiec, W.A. Dudek, S. Duplij, {\it Bi-element representations of ternary groups},
Commun. in Alg. {\bf 34} (2006), 1651--1670.
\bibitem{Dor}
W. D{\"o}rnte, {\it Unterschungen {\"u}ber einen verallgemeinerten Gruppenbegriff}, Math. Z., {\bf 29} (1929), 1--19.
\bibitem{Dud80}
W.A. Dudek, {\it Remarks on $n$-groups}, Demonstratio Math. {\bf 13} (1980), 165--181.
\bibitem{DudGla}
W.A. Dudek, K. G\l{}azek, {\it Around the Hossz{\'u}-Gluskin theorem for $n$-ary groups}, Discrete Math. {\bf 308} (2008), 4861--4876.
\bibitem{DudMi}
W.A. Dudek, J. Michalski, {\it On retracts of polyadic groups}, Demonstratio Math. {\bf 17} (1984), 281--301.
\bibitem{GlaGle}
   K. G\l{}azek, B. Gleichgewicht,
    {\it Abelian $n$-groups},
     In: B. Cs\'ak\'any, E. Fried, E.T. Schmidt (eds.),
    Universal Algebra. Colloquia Mathematica Societatis J{\'a}nos Bolyai, vol. 29, pp. 321--329.
     North-Holland, Amsterdam (1982).
\bibitem{Kam}
S. Kamada, {\it Knot invariants derived from quandles and racks}, Geom. Topol. Monogr. {\bf 4} (2002), 103--117.
\bibitem{Kau}
	L.H. Kauffman,
	{\it Knots and physics; 4th ed.},
	Series on Knots and Everything,
	World Scientific, Singapore (2013).
\bibitem{Kau99}
L.H. Kauffman, {\it Virtual knot theory}, European Journal of Combinatorics {\bf 20} (1999), 663--691.
\bibitem{Leh}
D.H. Lehmer, {\it A ternary analogue of abelian groups}, Am.
J. Math. {\bf 54} (1932), 329--338.
\bibitem{NelPic} S. Nelson, S. Pico, {\it Virtual tribrackets},
arXiv:1803.03210 [math.GT], 2018.
\bibitem{Nie14}
	M. Niebrzydowski,
	{\it On some ternary operations in knot theory}, Fund. Math. {\bf 225} (2014), 259--276.
\bibitem{Nie17}
	M. Niebrzydowski,
	{\it Homology of ternary algebras yielding invariants of knots and knotted surfaces}, arXiv:1706.04307 [math.GT], 2017.
\bibitem{Post}
E.L. Post, {\it Polyadic groups}, Trans. Amer. Math. Soc. {\bf 48} (1940) 208--350.
\bibitem{RomSmi}
A.B. Romanowska, J.D.H. Smith, {\it Modes}, World Scientific Publishing Co., Inc., River Edge, NJ (2002), xii+623pp.
\bibitem{Str}
G.O. Strawn, {\it Results in polyadic group theory}, Ph.D. thesis (1969), available at Iowa State University Digital Repository https://lib.dr.iastate.edu/cgi/viewcontent.cgi?article=4782\&context=rtd
\end{thebibliography}
\end{document}